\numberwithin{equation}{section}
\newtheorem{thm}{Theorem}[section]
\newtheorem{corl}[thm]{Corollary}
 { \theoremstyle{definition}
\newtheorem{defn}[thm]{Definition}
\newtheorem{Example}[thm]{Example}
\newtheorem{rmk}[thm]{Remark}}
\DeclareMathOperator{\End}{End} 
\DeclareMathOperator{\Id}{Id} 
\DeclareMathOperator{\linspan}{span} 
\DeclareMathOperator{\Mult}{Mult} 
\newcommand{\rInd}{\operatorname{\mbox{$r$-$\operatorname{Ind}$}}}
\newcommand{\C}{\mathbb{C}} 
\renewcommand{\H}{\mathcal{H}} 
\renewcommand{\S}{\mathcal{S}}
\renewcommand{\L}{\mathcal{L}}
\newcommand{\K}{\mathcal{K}} 
\newcommand{\N}{\mathbb{N}} 
\newcommand{\ox}{\otimes} 
\newcommand{\R}{\mathbb{R}} 
\newcommand{\ol}[1]{\overline{#1}} 
\begin{document}

\allowdisplaybreaks

\newcommand{\arXivNumber}{1612.03559}

\renewcommand{\PaperNumber}{041}

\FirstPageHeading

\ShortArticleName{Non-Commutative Vector Bundles for Non-Unital Algebras}

\ArticleName{Non-Commutative Vector Bundles\\ for Non-Unital Algebras}

\Author{Adam RENNIE and Aidan SIMS}
\AuthorNameForHeading{A.~Rennie and A.~Sims}
\Address{School of Mathematics and Applied Statistics,\\
University of Wollongong, Northfields Ave 2522, Australia}
\Email{\href{mailto:renniea@uow.edu.au}{renniea@uow.edu.au}, \href{mailto:asims@uow.edu.au}{asims@uow.edu.au}}

\ArticleDates{Received December 13, 2016, in f\/inal form June 12, 2017; Published online June 16, 2017}

\Abstract{We revisit the characterisation of modules over non-unital $C^*$-algebras analogous to
modules of sections of vector bundles. A fullness condition on the associated multiplier
module characterises a class of modules which closely mirror the commutative case. We
also investigate the multiplier-module construction in the context of bi-Hilbertian
bimodules, particularly those of f\/inite numerical index and f\/inite Watatani index.}

\Keywords{Hilbert module; vector bundle; multiplier module; Watatani index}

\Classification{57R22; 46L85}

\section{Introduction}\label{sec:intro}

The Serre--Swan theorem says that the Hilbert modules over unital commutative $C^*$-algebras that can be realised as the modules of sections of locally trivial vector bundles over compact spaces are precisely the f\/inite projective modules. By direct analogy, we \emph{define} non-commutative vector bundles over unital $C^*$-algebras to be f\/inite projective modules, this def\/inition being justif\/ied by the connection to the non-commutative def\/inition of $K$-theory.

This note revisits the question of the correct notion of a non-commutative vector bundle over a non-unital $C^*$-algebra. We prove the equivalence of several conditions on a~Hilbert module $E$ over a non-unital $C^*$-algebra $A$ that, when $A$ is commutative, characterise modules of sections of vector bundles. This extends previous partial characterisations \cite{RennieSmooth}. In particular, our results apply to suspensions of f\/initely generated projective modules over unital $C^*$-algebras. This is an important motivation since it allows us to apply techniques like those of \cite{RRSext} to study Cuntz--Pimsner algebras of suspended $C^*$-correspondences, and thereby to bootstrap computational techniques from even $K$-groups and Kasparov groups to their odd counterparts.

We then investigate the structure of multiplier modules associated to bimodules that are bi-Hilbertian in the sense of \cite{KajPinWat}. We prove that, under mild hypotheses, the bi-Hilbertian structure and f\/inite numerical index pass to the multiplier module. We also establish that, by contrast, the multiplier module frequently does not have f\/inite Watatani index. Indeed the multiplier module has f\/inite Watatani index if and only if it is f\/initely generated and projective as a right-Hilbert module over the multiplier algebra, which in turn holds if and only if it is full as a left-Hilbert module over the multiplier algebra.

We start by brief\/ly recalling what is already known. Then we prove our f\/irst result, Theo\-rem~\ref{thm:equiv-one}, which shows that if $E$ is a Hilbert module over a~$\sigma$-unital $C^*$-algebra $A$, then its multiplier module is full as a left-Hilbert $\End_A(E)$-module if and only if $E$ is a f\/initely generated projective module over a~suitable unitisation of $A$. We discuss some consequences of this result. In particular, by applying our results to the setting of commutative $C^*$-algebras, we prove that every locally trivial vector bundle $V$ over a locally compact space $X$ of f\/inite topological dimension extends to a locally trivial vector bundle $V^c$ over some compactif\/ication $X^c$ of~$X$. The compactif\/ication $X^c$ required, and the isomorphism class of the extension $V^c$, depend on a~choice of frame for~$V$, which we illustrate by example.

We then recall the notion of a bi-Hilbertian bimodule \cite{KajPinWat}. We prove that if $E$ is countably generated with injective left action and f\/inite Watatani index, then the bi-Hilbertian structure, and also f\/inite numerical index, pass from $E$ to its multiplier module in a natural way. We then describe a number of conditions that are equivalent to the multiplier module being f\/initely generated and projective, and show how this applies to modules over commutative $C^*$-algebras.

\section{Finite projective modules and non-unital analogues}
\label{sec:fgps} Throughout the paper, $A$ denotes a $\sigma$-unital $C^*$-algebra. Given a right $C^*$-$A$-module $E$, we denote the $C^*$-algebra of adjointable operators on $E$ by $\End_A(E)$. For $e,\,f\in E$, the \mbox{rank-1} endomorphism $g \mapsto e \cdot (f \,|\, g)_A$ is denoted by $\Theta_{e,f}$, and is adjointable with adjoint $\Theta_{f,e}$. We write $\End_A^0(E)$ for $\overline{\linspan}\{\Theta_{e,f} \colon e,f \in E\}$, the closed $*$-ideal of compact endomorphisms in $\End_A(E)$. We write $\ell^2(A)$ for the standard $C^*$-module over $A$; that is $\ell^2(A)$ is equal to $\big\{\xi \colon \N \to A \,|\, \sum\limits^\infty_{n=1} \xi_i^*\xi_i\text{ converges in }A\big\}$ with inner product $(\xi \,|\, \eta)_A = \sum_i \xi^*_i \eta_i$. We say that a~(right) inner product module $E$ over a~$C^*$-algebra $A$ is \emph{full} if the closed span of the inner products $(e\,|\, f)_A$ is all of~$A$.

In keeping with the preceding paragraph, throughout the paper, by a \emph{vector bundle} over a~locally compact Hausdorf\/f space $X$, we will mean a~locally trivial f\/inite-rank complex vector bundle equipped with a continuous family of inner products. If $X$ is paracompact, then every f\/inite rank vector bundle over $X$ admits such a family of inner products.

A \emph{frame} for a right $C^*$-$A$-module $E$ is a sequence $\{e_j\}_{j\geq 1}\subset E$ such that
\begin{gather*}
\sum_{j\geq 1}\Theta_{e_j,e_j}\text{ converges strictly to }{\rm Id}_E.
\end{gather*}
If $\{e_j\}_{j \ge 1}$ is a frame for $E$, then $E$ is generated as a right $A$-module by the $e_j$, so it is countably generated.

Any frame $\{e_j\}$ for $E$ determines a stabilisation map in the sense of Kasparov: there is an adjointable map $v \colon E \to \ell^2(A)$ such that
\begin{gather*}
v(e) = \big((e_j \,|\, e)_A\big)_{j\geq 1}\quad\text{ for all $e \in E$.}
\end{gather*}
We have $v^*v={\rm Id}_E$, and so $p := vv^*$ is a projection in $\End_A(\ell^2(A))$; specif\/ically, $p$ satisf\/ies
\begin{gather*}
(p \xi)_i = \sum_j (e_i \,|\, e_j)_A \xi_j \qquad\text{for all $\xi \in \ell^2(A)$.}
\end{gather*}

One of the fundamental points of contact between non-commutative geometry and classical geometry is the celebrated Serre--Swan theorem. Given a~vector bundle $V \to X$ over a~compact space $X$, we write $\Gamma(X, V)$ for the space of continuous sections of~$V$.

\begin{thm}[\cite{Swan}]\label{thm:SS} Let $X$ be a compact Hausdorff space. A $($right$)$ $C(X)$-module $M$ is finitely generated and projective if and only if there is a vector bundle $V\to X$ such that $M\cong \Gamma(X,V)$.
\end{thm}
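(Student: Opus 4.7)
The plan is to prove the two implications separately, using compactness of $X$ in both.

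For the $\Leftarrow$ direction, suppose $V \to X$ is a vector bundle. I would use compactness of $X$ to extract a finite open cover $\{U_i\}_{i=1}^n$ on which $V$ trivialises, together with a subordinate partition of unity $\{\phi_i\}$. Local trivialisations $V|_{U_i} \cong U_i \times \C^{k_i}$ provide local frames, and multiplying by $\phi_i^{1/2}$ produces finitely many global sections $\{s_j\}_{j=1}^N \subset \Gamma(X,V)$ whose values at each $x \in X$ span $V_x$. Using the continuous family of inner products on $V$, a direct computation shows that $\sum_j \Theta_{s_j,s_j} = \Id_{\Gamma(X,V)}$, so $\{s_j\}$ is a frame. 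By the stabilisation construction recalled just above, $\Gamma(X,V) \cong p\,C(X)^N$ for a projection $p \in M_N(C(X))$, and hence is finitely generated and projective.

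For the $\Rightarrow$ direction, write $M = p\,C(X)^N$ for a self-adjoint projection $p \in M_N(C(X))$ and define the fibres $V_x := p(x)\C^N \subseteq \C^N$. I would set $V := \{(x,v) : v \in V_x\} \subseteq X \times \C^N$ with the subspace topology and projection onto the first coordinate. Continuity of $x \mapsto p(x)$ together with continuity of the trace force $x \mapsto \dim V_x = \operatorname{Tr} p(x)$ to be locally constant. For each $x_0 \in X$, the formula $u(x) := p(x)p(x_0) + (1-p(x))(1-p(x_0))$ defines a continuous function with $u(x_0) = \Id_{\C^N}$, hence invertible on a neighbourhood $U$ of $x_0$; the identity $p(x)u(x) = u(x)p(x_0)$ then makes $(x,v) \mapsto (x, u(x)v)$ a homeomorphism $U \times V_{x_0} \cong V|_U$, giving the required local trivialisations. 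Finally, the assignment $p\xi \mapsto (x \mapsto p(x)\xi(x))$ identifies $M$ with $\Gamma(X,V)$.

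The hard part will be the local trivialisation step in the $\Rightarrow$ direction: producing the intertwining invertible $u(x)$ and verifying that it yields a homeomorphism between $V|_U$ and $U \times V_{x_0}$ is where one genuinely uses that one is working over a commutative $C^*$-algebra and that projections behave well under continuous deformation. The remainder is bookkeeping around partitions of unity and the standard identification of finitely generated projective modules with corners of free modules that was already reviewed in the excerpt.
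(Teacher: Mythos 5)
The paper offers no proof of this statement---it is quoted directly from Swan's paper---so there is nothing internal to compare against; your argument is the classical one and is essentially correct. In the direction from bundles to modules, you use compactness to extract a finite trivialising cover, a partition of unity to manufacture a finite frame, and the stabilisation map to realise $\Gamma(X,V)$ as $p\,C(X)^N$; this is precisely the technique the paper itself deploys later, in the proofs of Theorem~\ref{cor:VB} and Corollary~\ref{cor:VB2}, for the non-compact analogues. In the converse direction, your intertwiner $u(x)=p(x)p(x_0)+(1-p(x))(1-p(x_0))$ is Swan's original device for local triviality, and your verification (invertibility of $u$ near $x_0$, the relation $p(x)u(x)=u(x)p(x_0)$, local constancy of $x\mapsto\operatorname{Tr}p(x)$) is complete. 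Note that the paper's own route to local triviality in Corollary~\ref{cor:VB2} is genuinely different---a Fell--Doran fibrewise construction plus an almost-orthonormality estimate---but that argument is designed for a module not already presented as a corner of a free module; for $p\,C(X)^N$ your route is the more direct one.

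One step should not be waved at: the identity $\sum_j\Theta_{s_j,s_j}=\Id_{\Gamma(X,V)}$ is false for sections obtained from an arbitrary trivialisation $V|_{U_i}\cong U_i\times\C^{k_i}$; it holds only if the local sections are pointwise orthonormal for the given continuous family of inner products. Either apply Gram--Schmidt to the trivialising sections first (this is continuous on linearly independent tuples), as the paper implicitly does when it chooses sections $f_{U,j}$ with $\sum_j\Theta_{f_{U,j}(x),f_{U,j}(x)}=h_U(x)\Id_V$ in Theorem~\ref{cor:VB}, or settle for the weaker but sufficient claim that $T:=\sum_j\Theta_{s_j,s_j}$ is positive and bounded below (compactness of $X$ plus fibrewise spanning), hence invertible, and replace each $s_j$ by $T^{-1/2}s_j$. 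Similarly, ``finitely generated projective'' a priori yields $M\cong e\,C(X)^N$ for an idempotent $e$; you should either invoke the standard replacement of $e$ by a self-adjoint projection $p$ with $e\,C(X)^N\cong p\,C(X)^N$, or observe that your $u(x)$ works verbatim for an idempotent and equip the resulting bundle with inner products afterwards. Neither point is an obstruction, but both deserve a sentence.
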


We can remove the word ``projective'' from the statement of Theorem~\ref{thm:SS} if we instead consider full inner-product modules over $C(X)$~-- or more generally inner-product modules $E$ such that $(E \,|\, E)_{C(X)}$ is a unital, and hence complemented, ideal of~$C(X)$. This is because every full f\/initely generated right inner product $C(X)$-module can be made into a $C^*$-module~$M$ which, by Kasparov's stabilisation theorem, is automatically projective.

When $X$ is non-compact, the natural $C_0(X)$-module arising from a vector bundle $V \to X$ is the module $\Gamma_0(X,V)$ of continuous sections of $V$ such that $x \mapsto (\xi(x) \,|\, \xi(x))$ belongs to~$C_0(X)$. Important examples of this are restrictions $V \to X$ of vector bundles $V^c\to X^c$ over some compactif\/ication $X^c$ of $X$. The following non-unital Serre--Swan theorem characterises the algebraic structure of such modules.

\begin{thm}[{\cite[Theorem~8]{RennieSmooth}}]
\label{thm:nonunital-SS} Let $X$ be a locally compact Hausdorff space and $X^c$ a~compacti\-fication of $X$. Set $A=C_0(X)$ and $A_b=C(X^c)$. A right $A$-module $E$ is of the form $pA^n$ for some projection $p\in M_n(A_b)$ if and only if there is a $($locally trivial$)$ vector bundle $V \to X^c$ such that $E \cong \Gamma_0(X,V|_X)$.
\end{thm}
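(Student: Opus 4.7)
The plan is to deduce both implications from the classical Serre--Swan theorem applied to the compact space $X^c$, together with a single key identification: for any projection $p \in M_n(A_b)$,
\begin{gather*}
pA^n = \bigl\{s \in pA_b^n : (s\,|\,s)_{A_b} \in A\bigr\}.
\end{gather*}
The inclusion $\subseteq$ is immediate since $A$ is a two-sided ideal of $A_b$. For the reverse inclusion, if $s = (s_1, \ldots, s_n) \in A_b^n$ has $\sum_i s_i^* s_i \in C_0(X)$, then each $|s_i(x)|^2 \leq \sum_j |s_j(x)|^2$ vanishes as $x \in X$ approaches $X^c \setminus X$, and continuity of $s_i$ on $X^c$ then forces $s_i$ to vanish on $X^c \setminus X$, so $s_i \in C_0(X) = A$. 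The density of $X$ in $X^c$ does all the real work here, and this is the step I expect to be the main obstacle; once it is in place, everything else is routine.

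For the \emph{if} direction, suppose $V \to X^c$ is given. Classical Serre--Swan (Theorem~\ref{thm:SS}) produces an integer $n$, a projection $p \in M_n(A_b)$, and an $A_b$-module isomorphism $\Gamma(X^c, V) \cong pA_b^n$ that intertwines the pointwise bundle inner product with the standard $A_b$-valued inner product on $A_b^n$ (one chooses a global frame for $V$ and uses the corresponding stabilisation map, as recalled above). Restriction $s \mapsto s|_X$ identifies the submodule $\{s \in \Gamma(X^c, V) : (s\,|\,s)_{A_b} \in A\}$ with $\Gamma_0(X, V|_X)$: one direction is immediate, while for the other one extends a section $\sigma \in \Gamma_0(X, V|_X)$ by $0$ across $X^c \setminus X$, with continuity ensured by $\|\sigma(x)\| \to 0$ as $x$ approaches the boundary. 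Combined with the key identification, this gives $\Gamma_0(X, V|_X) \cong pA^n$.

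For the \emph{only if} direction, given $E \cong pA^n$ with $p \in M_n(A_b)$, classical Serre--Swan on $X^c$ yields a locally trivial vector bundle $V \to X^c$ with $\Gamma(X^c, V) \cong pA_b^n$. Restricting to the open subset $X \subset X^c$ produces a locally trivial vector bundle $V|_X \to X$, and the same correspondence as above delivers $\Gamma_0(X, V|_X) \cong pA^n \cong E$, completing the proof.
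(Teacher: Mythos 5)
The paper states this result without proof, citing \cite[Theorem~8]{RennieSmooth}; your argument is correct and is essentially the standard reduction to Swan's theorem over the compact space $X^c$ used there, with the identification $pA^n = \{s \in pA_b^n \colon (s\,|\,s)_{A_b} \in A\}$ (valid because $X$ is automatically \emph{open} and dense in $X^c$, so $C_0(X)$ sits in $C(X^c)$ as the ideal of functions vanishing on $X^c\setminus X$) doing the real work. The one step you should justify explicitly is that the Serre--Swan isomorphism $\Gamma(X^c,V)\cong pA_b^n$ can be chosen to intertwine the inner products -- this follows from the finite-frame/stabilisation construction recalled in Section~2 -- or you can avoid inner products altogether by noting that $pA^n = pA_b^n\cdot A$ and $\Gamma_0(X,V|_X)\cong\Gamma(X^c,V)\cdot C_0(X)$ via Cohen factorisation, so any $A_b$-module isomorphism restricts correctly.
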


Theorem~\ref{thm:nonunital-SS} provides a reasonable algebraic characterisation of modules of the form $\Gamma_0(X, V)$ where the vector bundle $V$ extends to a bundle over some compactif\/ication of $X$. We use this to motivate our def\/inition of the non-commutative analogue.

Recall that a \emph{unitisation} of a nonunital $C^*$-algebra $A$ is an embedding $\iota\colon A \hookrightarrow A_b$ of $A$ as an essential ideal of a~unital $C^*$-algebra $A_b$.

\begin{defn} \label{def:a-b-fgp} Let $A$ be a nonunital separable $C^*$-algebra and $\iota\colon A\hookrightarrow A_b$ a~unitisation of $A$. An \emph{$A_b$-finite projective $A$-module} is a right $A$-module that is isomorphic to $pA^n$ for some $n \in \N$ and some projection $p\in M_n(A_b)$.
\end{defn}

To state our main result, we need to recall how to embed a Hilbert $A$-module $E$ in the associated multiplier module, a tool used frequently in the Gabor-analysis literature (see, for instance, \cite{AB,RT}).

\begin{defn} \label{defn:mult-link} Given a right $C^*$-$A$-module $E$, recall that the \emph{linking algebra} $\L(E)$ is the algebra
\begin{gather*}
\L(E)=\End^0_A(E\oplus A)
=\left\{\begin{pmatrix} \,T & e\\ \ol{f} & a\end{pmatrix}\colon T\in \End_A^0(E),\ a\in A,\ e\in E,\ \ol{f}\in \ol{E}\right\},
\end{gather*}
where $\ol{E}$ is the conjugate module (a left $A$-module). Let $r={\rm Id}_E\oplus 0\in\Mult(\L(E))$ and $s=0\oplus 1_{\Mult(A)}\in \Mult(\L(E))$ and def\/ine
\begin{gather*}
\Mult(E)=r\Mult(\L(E))s.
\end{gather*}
Then $\Mult(E)$ is a right $C^*$-$\Mult(A)$-module with right action implemented by right multiplication in $\Mult(\L(E))$, and right inner-product $(e \,|\, f)_{\Mult(A)} = e^* f$.
\end{defn}

Writing $\operatorname{Hom}_A(A, E)$ for the Banach space of adjointable operators from $A$ to $E$, we then have $\Mult(E) \cong \operatorname{Hom}_A(A, E)$: for any $m \in \Mult(E)$, the map $a \mapsto m a$ belongs to $\operatorname{Hom}_A(A, E)$, and conversely if $m \in \operatorname{Hom}_A(A, E)$, then there is a multiplier of $\L(E)$ such that
\begin{gather*}
m\begin{pmatrix} T & e\\ \ol{f} & a\end{pmatrix} = \begin{pmatrix} m\circ (f \,|\, \cdot)_A & ma \\ 0 & 0\end{pmatrix}.
\end{gather*}

For $e,f \in \Mult(E)$, let $\Theta_{e,f}$ be the associated rank-one operator on $\Mult(E)$. We show that $E \subseteq \Mult(E)$ is invariant for $\Theta_{e,f}$. To see this, f\/ix $g \in E$, use Cohen factorisation to write $g = g' \cdot a$ for some $g' \in E$ and $a \in A$. Regarding $g'$ as an element of $\Mult(E)$, we calculate
\begin{gather}\label{eq:E multE inv}
\Theta_{e,f}(g) = e \cdot (f \,|\, g'\cdot a) = e\cdot (f \,|\, g)a \in \Mult(E)\cdot A,
\end{gather}
which is equal to $E$ by \cite[Remark~3.2(a)]{AB}.

We deduce that $\Theta_{e,f}|_E$ is adjointable with adjoint $\Theta_{f,e}|_E$. Hence $\Theta_{e,f}|_E \in \End_A(E)$. Abusing notation a little, we will just write $\Theta_{e,f}$ for this operator on $E$ henceforth. The left inner product ${}_{\End_{\Mult(A)}^0(\Mult(E))}(e\,|\, f)=\Theta_{e,f}$
gives a norm on $\Mult(E)$ equivalent to that coming from the right $\Mult(A)$-valued inner product. This can be seen directly from the linking algebra picture above.

The inclusion $\End_{\Mult(A)}^0(\Mult(E))\hookrightarrow \End_A(E)$ is injective, and so we can regard $\Mult(E)$ as a left $\End_A(E)$-$C^*$-module. We will denote the left inner product by ${}_{\End_A(E)}(\cdot\,|\,\cdot)$; we then have ${}_{\End_A(E)}(e \,|\, f)
= \Theta_{e,f}|_E$ for $e,f \in \Mult(E)$.

\section{Finite projectivity and the multiplier module}

\begin{thm} \label{thm:equiv-one}
Let $A$ be a non-unital $\sigma$-unital $C^*$-algebra and let $E$ be a right $C^*$-$A$-module. The following are equivalent.
\begin{enumerate}\itemsep=0pt
\item[$1.$] 
There is a finite subset $F \subseteq \Mult(A)$ such that, putting
 $A_b := C^*(F \cup A) \subseteq \Mult(A)$, the module $E$ is $A_b$-finite
 projective.
\item[$2.$] 
The module $E$ is $\Mult(A)$-finite projective.
\item[$3.$] 
The module $\Mult(E)$ is finitely generated and projective as a right $\Mult(A)$ module.
\item[$4.$] 
The module $\Mult(E)$ is full as a left $C^*$-module over $\End_A(E)$.
\end{enumerate}
\end{thm}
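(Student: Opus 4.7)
The strategy is to close the cycle $(1) \Rightarrow (2) \Rightarrow (3) \Rightarrow (4) \Rightarrow (1)$. The implication $(1) \Rightarrow (2)$ is immediate from the inclusion $A_b \subseteq \Mult(A)$. For $(2) \Rightarrow (3)$, I would exploit the identification $\Mult(E) \cong \operatorname{Hom}_A(A, E)$ recorded after Definition~\ref{defn:mult-link}: applying $\operatorname{Hom}_A(A, \cdot)$ to $E \cong pA^n$ with $p \in M_n(\Mult(A))$ yields $\Mult(E) \cong p\Mult(A)^n$, manifestly finitely generated projective over $\Mult(A)$. For $(3) \Rightarrow (4)$, a finite decomposition $\Id_{\Mult(E)} = \sum_{i=1}^n \Theta_{e_i, f_i}$ restricts, via the injection $\End^0_{\Mult(A)}(\Mult(E)) \hookrightarrow \End_A(E)$, to $\Id_E = \sum_{i=1}^n {}_{\End_A(E)}(e_i \,|\, f_i)$, so the closed span of the left inner products contains $\Id_E$. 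This closed span is always a two-sided ideal of $\End_A(E)$, and any ideal containing the unit is the whole algebra; fullness follows.

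For $(4) \Rightarrow (1)$, the key observation is that the injective $*$-homomorphism $\End^0_{\Mult(A)}(\Mult(E)) \to \End_A(E)$ automatically has closed range. Hence fullness, which a priori only says the image is dense, in fact gives equality, so we obtain an isomorphism of $C^*$-algebras. Since $\End_A(E)$ is unital with unit $\Id_E$, the domain $\End^0_{\Mult(A)}(\Mult(E))$ is unital as well. A standard argument then produces a finite rank-one decomposition of the identity: approximate $\Id_{\Mult(E)}$ within $1/2$ by a finite sum $T = \sum_i \Theta_{e_i, f_i}$, invert $T$ in the unital $C^*$-algebra, and rewrite $\Id_{\Mult(E)} = T T^{-1} = \sum_i \Theta_{e_i,\, (T^{-1})^* f_i}$. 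From such a finite decomposition one assembles a projection $p \in M_n(\Mult(A))$ with $\Mult(E) \cong p\Mult(A)^n$. Taking $F$ to be the finite set of entries of $p$ and setting $A_b := C^*(F \cup A)$, one has $p \in M_n(A_b)$, and using $\Mult(A) \cdot A = A$ together with $\Mult(E) \cdot A = E$ gives $E \cong \Mult(E) \cdot A \cong p\Mult(A)^n \cdot A = pA^n$, which is~$(1)$.

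The main obstacle is $(4) \Rightarrow (1)$: one must pass from the qualitative density statement supplied by fullness to a genuine finite presentation of $E$ over a finitely-generated extension of $A$. The critical ingredient is the closed-range property of $*$-homomorphisms between $C^*$-algebras, which promotes fullness to an actual unit for $\End^0_{\Mult(A)}(\Mult(E))$; the remaining implications are then either trivial or follow from the standard translation between unitality of $\End^0$ and finite projectivity.
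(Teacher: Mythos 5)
Your argument is correct and takes essentially the same route as the paper: $(1)\Leftrightarrow(2)$ via the matrix entries of $p$, $(2)\Rightarrow(3)$ via $\Mult(E)\cong\operatorname{Hom}_A(A,E)$, the recovery of $E=\Mult(E)\cdot A$ from $p\Mult(A)^n$ by Cohen factorisation, and the key step that fullness forces $\Id_{\Mult(E)}$ to be a compact endomorphism, whence $\Mult(E)$ is finitely generated projective. The only cosmetic differences are that you close a single cycle rather than proving pairwise equivalences, and that you phrase the crucial implication through the closed range of the injective $*$-homomorphism $\End^0_{\Mult(A)}(\Mult(E))\hookrightarrow\End_A(E)$ and then re-derive by hand the standard fact (cited by the paper from Blecher--Le~Merdy) that unitality of the compacts yields a finite frame and a projection $p\in M_n(\Mult(A))$.
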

\begin{proof}
The universal property of $\Mult(A)$ shows that for any unitisation $A_b$ of $A$ as in~(1), we have an inclusion $A_b\hookrightarrow\Mult(A)$. Thus $pA_b^n\ox_{A_b}\Mult(A)\cong p\Mult(A)^n$ and so~(1) implies~(2).

For~(2) implies~(1), we observe that since $p \in M_n(\Mult(A))$, it has f\/initely many matrix entries $(p_{i,j})^n_{i=1}$. Let $F = \{p_{i,j} \colon i,j \le n\} \cup \{1_{\Mult(A)}\}$, and let $A_b = C^*(A \cup F) \subseteq \Mult(A)$. Then $A_b$ is a~unitisation of $A$, $p \in M_n(A_b)$, and $E \cong p A^n$ as a right $A$-module as required.

For~(2) implies~(3), recall that we may identify $\Mult(E)$ with $\operatorname{Hom}_A(A,E)$, from which we deduce that $\Mult(pA^n)\cong p\Mult(A)^n$.

To see that~(3) implies~(2), f\/ix a f\/inite frame $\{\xi_j\}$ for $\Mult(E)$ as a right-Hilbert $\Mult(A)$-module. By Kasparov's stabilisation theorem applied to this frame, there exist an integer $n \ge 1$, a projection $p \in M_n(\Mult(A))$, and an isomorphism $\rho \colon \Mult(E) \to p\Mult(A)^n$ of right $\Mult(A)$-modules. We claim that $\rho(E) = pA^n$.

To see this, f\/irst f\/ix $e \in E$. Apply the strong form \cite[Proposition~2.31]{tfb} of Cohen factorisation to write $e = e' \cdot (e' \,|\, e')$ for some $e' \in E$. Then $\rho(e) = p \rho(e) = p \rho(e') \cdot(e' \,|\, e')$. Since $(e' \,|\, e') \in A$ and since $\Mult(E) \cdot A = E$, we deduce that $\rho(e) \in p A^n$. So $\rho(E) \subseteq pA^n$. For the reverse inclusion, we f\/ix $a \in p A^n$, and use Cohen factorisation in $p
A^n$ to write $a = a' \cdot a''$ for some $a' \in pA^n$ and $a'' \in A$. Write $a' = \rho(\xi)$ for some $\xi \in \Mult(E)$. Using again that $\Mult(E)\cdot A \subseteq E$, we see that $a = \rho(\xi)\cdot a'' = \rho(\xi \cdot a'') \in \rho(E)$.

To see that~(3) implies~(4), suppose that $\Mult(E)$ is f\/initely generated and projective as a~right $\Mult(A)$-module. Then $\End_{\Mult(A)}(\Mult(E)) = \End_{\Mult(A)}^0(\Mult(E))$, and so $\End_A(E) \subseteq \End_{\Mult(A)}(\Mult(E))$ belongs to the range of the outer product. So $\Mult(E)$ is full as a left $C^*$-module over $\End_A(E)$.

Finally, to see that~(4) implies~(3), note that associativity of multiplication in $\Mult(\L(E))$ shows that
\begin{gather*}
\xi \cdot (\eta \,|\, \zeta)_{\Mult(A)} = {_{\End_A(E)}(\xi \,|\, \eta)\cdot\zeta} \qquad\text{for all $\xi,\eta,\zeta \in \Mult(E)$.}
\end{gather*}
So for $\xi,\eta \in \Mult(E)$, the action of ${_{\End_A(E)}(\xi \,|\, \eta)}$ is implemented by the generalised compact operator $\Theta_{\xi, \eta} \in \End^0_{\Mult(A)}(\Mult(E))$. Since $\Mult(E)$ is full as a left $\End_A(E)$-module, the identity operator ${\rm Id}_E$ is in the range of ${_{\End_A(E)}(\cdot \,|\, \cdot)}$, and is therefore a compact operator on~$\Mult(E)$. Since $\Id_E \cdot e = e = \Id_{\Mult(E)} e$ for all $e \in \Mult(E)$, we deduce that ${\rm Id}_{\Mult(E)}$ is a~compact operator. So \cite[Theorem~8.1.27]{BleM} implies that $\Mult(E)$ is f\/initely generated.
\end{proof}

\begin{rmk} Suppose that $A$ and $B$ are $C^*$-algebras and $E_A$ and $E_B$ are right-Hilbert modules over $A$ and $B$ respectively that satisfy the equivalent conditions of Theorem~\ref{thm:equiv-one}. Then the external tensor product $(E\ox F)_{A\ox B}$ satisf\/ies the same conditions. For if $E\cong pA^n$ for some projection $p\in M_n(\Mult(A))$ and $F\cong qB^m$ with $q\in M_m(B)$, then $E\ox F\cong (p\ox q)(A^n\ox B^m)$.
\end{rmk}

\begin{rmk} Suppose that $E$ is an $A$-module satisfying the equivalent conditions of Theo\-rem~\ref{thm:equiv-one}, and that $\varphi \colon A \to B$ is a non-degenerate $*$-homomorphism. Then we can write $E = p A^n$ for some $p \in \Mult(A)$. Since $\varphi$ is nondegenerate, it extends to a~homomorphism $\tilde\varphi \colon \Mult(A) \to \Mult(B)$, and we have $E\ox_{\varphi}B\cong \tilde\varphi(p)B^n$. Hence $E\ox_{\varphi}B$ also satisf\/ies the equivalent conditions in Theorem~\ref{thm:equiv-one}.
\end{rmk}

The f\/inite set~$F$, and hence the unitisation~$A_b$ of~$A$, appearing in Theorem~\ref{thm:equiv-one} is not canonical. The set $F$ depends on the choice of f\/inite right basis $\{\xi_j\}$ for~$\Mult(E)$ to which Kasparov's stabilisation theorem is applied in the second paragraph of the proof of Theorem~\ref{thm:equiv-one}. It is less obvious, but also true, that even if two choices of f\/inite frame $\{\xi_j\}$, $\{\xi'_j\}$ yield the same unitisation~$A_b$ of~$A$, the enveloping projective modules $p A_b^n$ and $p'A_b^n$ obtained from these frames may not be isomorphic, as we now
demonstrate.

\begin{Example}\label{ex:hopf-extends} We let $X := \C$. Let $A=C_0(X)$, and let $E=C_0(X)$ the trivial module over $C_0(X)$ with the obvious inner product and multiplication action. Fix any countable locally f\/inite open cover of $X$, f\/ix a partition of unity~$(\varphi_n)_n$ with respect to this cover, and set $e_n=\sqrt{\varphi_n}$ for each $n$. Then $\{e_n\}$ is a frame for~$E$.

The module $\Mult(E)=\Mult(A)$ is a~module over $\Mult(A)=C(\beta X)$, where $\beta X$ denotes the Stone--\v{C}ech compactif\/ication of~$X$. Let us consider two choices of f\/inite frame for this module. The f\/irst choice of frame has a single element
\begin{gather*}
x_1=1_{\Mult(A)}.
\end{gather*}
The construction in the proof of Theorem~\ref{thm:equiv-one} applied to this frame yields $A_b = C(\C \cup \{\infty\}) \cong C(S^2)$, $n = 1$, $p = 1_{A_b}$ and hence $p A_b^n = A_b$, the trivial module over $A_b$.

Now identify $\Mult(E)$ with $\big(\begin{smallmatrix} 1 & 0 \\0 & 0 \end{smallmatrix}\big) \binom{\Mult(A)}{\Mult(A)}$, and consider the elements
$y_i \in \Mult(E)$ given by continuous extension of the functions
\begin{gather*}
y_1=\begin{pmatrix} \frac{1}{\sqrt{1+|z|^2}} \\ 0\end{pmatrix}\qquad\mbox{and}\qquad
y_2=\begin{pmatrix} \frac{\ol{z}}{\sqrt{1+|z|^2}} \\ 0\end{pmatrix}
\end{gather*}
to $\beta X$. Then $(y_2 \,|\, y_2)_{\Mult(A)}$ is identically $1$ on the boundary $\beta X \setminus X$, and the other inner products $(y_i \,|\, y_j)_{\Mult(A)}$ are identically zero on the boundary. So all the $(y_i \,|\, y_j)_{\Mult(A)}$ take values in $C(X \cup \{\infty\})$, and hence the construction in the proof of Theorem~\ref{thm:equiv-one} applied to this frame again yields $A_b = C(\C \cup\{\infty\}) \cong C(S^2)$. Now $n =
2$, and calculation shows that
\begin{gather*}
p = \frac{1}{1+|z|^2}\begin{pmatrix} 1 & \ol{z}\\ z & |z|^2\end{pmatrix}.
\end{gather*}
Def\/ine $w\in M_2(\Mult(A))$ by continuous extension of the function
\begin{gather*}
w(z) = \begin{pmatrix} \frac{1}{\sqrt{1+|z|^2}} & 0\\ \frac{z}{\sqrt{1+|z|^2}} & 0\end{pmatrix}
\end{gather*}
to $\beta X$. We have
\begin{gather*}
ww^*=\frac{1}{1+|z|^2}\begin{pmatrix} 1 & \ol{z}\\ z & |z|^2\end{pmatrix} = p\qquad\mbox{and}\qquad
w^*w=\begin{pmatrix} 1 & 0\\ 0 & 0\end{pmatrix}.
\end{gather*}
So this $w$ def\/ines an isomorphism of modules $pA^2 \cong A$ over $A = C_0(X)$, but this does not extend to an isomorphism of modules over $A_b$ since $w$ is not well-def\/ined on $S^2$. In particular, $p A_b^2$ is the module of sections of the Hopf line bundle over $S^2 \cong \C \cup \{\infty\}$. This is certainly not trivial, and so not isomorphic to the trivial extension obtained for the f\/irst choice of frame above.
\end{Example}

One important situation to which Theorem~\ref{thm:equiv-one} applies is suspensions. Given a bimodule $E$ over an algebra $A$ we can def\/ine the suspended module $\S E$ over the suspension $\S A$ by
\begin{gather*}
\S E=C_0(\R)\ox E,\qquad (f_1\ox a_1)(g\ox e)(f_2\ox a_2) =f_1gf_2\ox a_1ea_2
\end{gather*}
and with the obvious inner product.

\begin{corl}\label{lem:suspensions} Let $A$ be a unital $C^*$-algebra and $E$ a finitely generated Hilbert module over~$A$. Then $\Mult(\S E)$ is full as a left-Hilbert $\End_{C(\beta\R)\ox A}$-module, and so $\S E$ is $C(\beta\R)\ox A$ projective. In fact $\Mult(\S E)$ is $A^\sim$ projective where $A^\sim$ is the minimal unitisation.
\end{corl}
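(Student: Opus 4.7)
The plan is to reduce to Theorem~\ref{thm:equiv-one} by writing $\S E$ explicitly as $P(\S A)^n$ for a projection $P$ living in a convenient subalgebra of $\Mult(\S A)$.

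First, since $A$ is unital and $E$ is finitely generated, $E$ is finite projective: a surjective adjointable map $\pi\colon A^n \to E$ (induced by a finite generating set) gives $\pi\pi^* \geq 0$ surjective and hence invertible on $E$, so $s := \pi^*(\pi\pi^*)^{-1/2}$ realises $E$ as $pA^n$ for the projection $p := ss^* = \pi^*(\pi\pi^*)^{-1}\pi \in M_n(A)$. Tensoring with $C_0(\R)$ yields $\S E = C_0(\R) \ox E \cong P(\S A)^n$, where $P := 1 \ox p$ is the constant $M_n(A)$-valued function with value $p$.

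The entries of $P$ lie in $\C\cdot 1 \ox A \subseteq C(\beta\R)\ox A$, and $C(\beta\R)\ox A \cong C(\beta\R, A)$ is a unitisation of $\S A$ since $C_0(\R)$ is essential in $C(\beta\R)$. Taking $F$ to be the entries of $P$ together with $1_{\Mult(\S A)}$ and $A_b := C^*(\S A \cup F)$, condition~(1) of Theorem~\ref{thm:equiv-one} is satisfied for $\S E$. The equivalences (1)$\Leftrightarrow$(3)$\Leftrightarrow$(4) then give directly that $\Mult(\S E)$ is full as a left-Hilbert $\End_{\S A}(\S E)$-module and that $\S E$ is $C(\beta\R)\ox A$-finite projective, which is the first assertion.

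For the sharper final claim, I note that $P = 1 \ox p$ is actually a constant function on the one-point compactification $S^1 = \R\cup\{\infty\}$, so its entries already lie in $M_n(C_0(\R)^\sim \ox A) = M_n(C(S^1)\ox A)$. This subalgebra of $C(\beta\R)\ox A$ is itself a unitisation of $\S A$ and is the natural candidate for the ``$A^\sim$'' of the statement; rerunning the argument with $A_b = C(S^1)\ox A$ yields the improvement. The only mild point of care throughout is tracking the various unitisations of $\S A$ involved (the minimal $(\S A)^\sim$, the intermediate $C(S^1)\ox A$, and the large $C(\beta\R)\ox A = \Mult(C_0(\R))\ox A$) and identifying which one is intended by $A^\sim$; once $P$ is correctly located in each, the whole corollary is a direct application of Theorem~\ref{thm:equiv-one}.
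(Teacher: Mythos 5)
Your proof is correct and is essentially the paper's argument in different clothing: the paper verifies condition (3) of Theorem~\ref{thm:equiv-one} by checking that a finite frame $\{\xi_j\}$ for $E$ induces the finite frame $\{1\ox\xi_j\}$ for $\Mult(\S E)$, while you verify the equivalent condition (1) by exhibiting the corresponding projection $P=1\ox p$ explicitly, and both then invoke Theorem~\ref{thm:equiv-one}. Your explicit $P$ has the added benefit of locating the unitisation $C(S^1)\ox A$ relevant to the final claim about $A^\sim$, which the paper's one-line proof leaves implicit.
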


\begin{proof}
Given any frame $\{\xi_j\}$ for $E$, one checks that $\{1\ox\xi_j\}$ is a frame for $C(\beta\R)\ox E$. So the result follows from Theorem~\ref{thm:equiv-one}.
\end{proof}

The fullness hypothesis in Theorem~\ref{thm:equiv-one} is quite restrictive. In particular, the multiplier module of a Hilbert module ${E_B}$ need not be full as a left $\End_A(E)$-module. For example, if $B$ is unital then $\Mult(E) = E$ \cite{ER}, and so if $\End_A^0(E)$ is non-unital then $\Mult(E)$ is not full as a~left $\End_A(E)$-module.

\begin{Example}Let $\H := \ell^2(\N)$ regarded as a $\K(\H)$--$\C$-imprimitivity bimodule, and let $\overline{\H}$ denote the conjugate $\C$--$\K(\H)$-equivalence. Let $A = \K(\H) \oplus \C$. Let $E := \H \oplus \overline{\H}$ and def\/ine an $A$-bi\-module structure on~$E$ by
\begin{gather*}
(T_1,\lambda_1)\big(\xi_1,\ol{\xi_2}\big)(T_2,\lambda_2)=\big(T_1\xi_1 \lambda_2, \lambda_1\ol{\xi_2}T_2\big)
=\big(T_1\xi_1 \lambda_2, \ol{T_2^*\xi_2\ol{\lambda_1}}\big)
\end{gather*}
for all $T_j\in \K(\H)$, $\lambda_j\in \C$, $\xi_j\in\H$. Def\/ine a right inner-product on $E$ by
\begin{gather*}
\big((\xi_1, \ol{\xi_2}) \,|\, (\eta_1, \ol{\eta_2})\big)_{\K(\H) \oplus \C} = \big(\langle \xi_1, \eta_1\rangle,\Theta_{\xi_2, \eta_2}\big).
\end{gather*}
Since $E$ decomposes as a direct sum of imprimitivity bimodules each of which has a unital algebra acting on one side or the other, the remark on pages 295~and~296 of~\cite{ER} shows that $E$ is equal to its multiplier module $\Mult(E)$. The linking algebra is
\begin{gather*}
\begin{pmatrix} A & \H\oplus\ol{\H}\\ \overline{\H}\oplus\H & A\end{pmatrix}
\end{gather*}
with multiplier algebra
\begin{gather*}
\begin{pmatrix} \mathcal{B}(\H)\oplus\C & \H\oplus\ol{\H}\\
\overline{\H}\oplus\H & \mathcal{B}(\H)\oplus\C\end{pmatrix}.
\end{gather*}
Hence $\Mult(E)=E=\H\oplus\ol{\H}$ is not full as a left $\End_A(E)$-module.
\end{Example}

\section{Application to vector bundles}

Recall that for us a vector bundle is always a locally trivial, f\/inite-rank, complex vector bundle equipped with a continuous family of inner products.

\begin{thm}
\label{cor:VB}
Let $X$ be a second-countable locally compact Hausdorff space of finite topological dimension.
\begin{enumerate}\itemsep=0pt
\item[$a)$]\label{it:bundle extension} Suppose that $V\to X$ is a vector bundle of rank $n$. Then there exists a vector bundle $\tilde{V}$ of rank $n$ over the Stone--\v{C}ech compactification $\beta X$ such that $V \cong \tilde{V}|_X$.
\item[$b)$] There is a metrisable compactification $X^c$ and a vector bundle $V^c\to X^c$ such that $V=V^c|_X$.
\end{enumerate}
\end{thm}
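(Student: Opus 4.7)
The plan is to deduce both parts from Theorem~\ref{thm:equiv-one} and the non-unital Serre--Swan theorem (Theorem~\ref{thm:nonunital-SS}), using the classical bundle-theoretic fact that a rank-$n$ vector bundle over a paracompact space of finite covering dimension $d$ is an orthogonal direct summand of the trivial bundle of rank $N := n(d+1)$. Concretely, this produces a continuous map $p \colon X \to M_N(\C)$ with $p(x)$ the orthogonal projection onto the image of $V_x \hookrightarrow \C^N$. Because $\|p(x)\| \le 1$ for all $x$, we have $p \in M_N(C_b(X)) = M_N(C(\beta X)) = M_N(\Mult(A))$, where $A := C_0(X)$.

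Identifying $A^N$ with $C_0(X,\C^N)$ and observing that $p(x)f(x) = f(x)$ is the condition $f(x) \in V_x$, one sees that $E := \Gamma_0(X,V) \cong p A^N$ as right $A$-modules. This realises $E$ as $\Mult(A)$-finite projective, i.e., condition~(2) of Theorem~\ref{thm:equiv-one} with $A_b = \Mult(A) = C(\beta X)$. For part~(a), Theorem~\ref{thm:nonunital-SS} applied with compactification $\beta X$ then yields a vector bundle $\tilde V \to \beta X$ with $V \cong \tilde V|_X$ (the isomorphism of bundles following from that of section modules). The fibre rank of $\tilde V$ is $y \mapsto \operatorname{tr}(p(y))$, which is continuous and integer-valued on $\beta X$ and equal to $n$ on the dense subset $X$, so is identically $n$.

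For part~(b), apply the implication $(2) \Rightarrow (1)$ of Theorem~\ref{thm:equiv-one} with $F := \{p_{ij} \colon 1 \le i,j \le N\}$ and $A_b := C^*(F \cup A) \subseteq C(\beta X)$, so that $A_b$ is a unitisation of $A$ and $p \in M_N(A_b)$. Since $X$ is second countable, $A = C_0(X)$ is separable, and since $F$ is finite $A_b$ is also separable. Its Gelfand spectrum $X^c$ is therefore a second countable, and hence metrisable, compact Hausdorff space, into which $X$ embeds as a dense open subset. Theorem~\ref{thm:nonunital-SS} applied to this $X^c$ produces a vector bundle $V^c \to X^c$ with $V \cong V^c|_X$.

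The main technical obstacle is the embedding of $V$ into a finite-rank trivial bundle, where the finite topological dimension hypothesis is essential; the resulting projection lives in $M_N(C_b(X)) = M_N(\Mult(C_0(X)))$ rather than $M_N(C_0(X))$, and it is precisely this that permits continuous extension across the Stone--\v{C}ech boundary. The remaining steps are essentially formal: invoking Theorem~\ref{thm:equiv-one} to pass between the four equivalent formulations, propagating constancy of rank by density and continuity, and controlling the size of the auxiliary compactification via separability of the subalgebra generated by the matrix entries of $p$.
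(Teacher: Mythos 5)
Your proof is correct, and at its core it spends the finite-dimension hypothesis in exactly the same way the paper does: the covering dimension $d$ is used to manufacture $n(d+1)$ bounded global sections, and everything is then funnelled through Theorem~\ref{thm:equiv-one} and Theorem~\ref{thm:nonunital-SS}. The difference is where you enter the circle of equivalences. You verify condition~(2) of Theorem~\ref{thm:equiv-one} directly, by importing the classical fact that a rank-$n$ bundle over a finite-dimensional paracompact space is an orthogonal summand of a trivial rank-$n(d+1)$ bundle and observing that the resulting projection lies in $M_N(C_b(X))=M_N(\Mult(A))$. The paper instead verifies condition~(4): it builds the sections $F_{i,j}$ explicitly (a partition of unity subordinate to a cover refined into $d+1$ families of pairwise disjoint trivialising sets, times scaled local orthonormal frames) and checks that $\Mult(E)$ is full as a left $\End_A(E)$-module. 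These are two presentations of one construction -- the Gram matrix $\big((F_{i,j}\,|\,F_{k,l})\big)$ of the paper's sections is precisely your projection $p$ -- so neither route is more general; yours outsources the work to the bundle-theory literature, while the paper's stays self-contained and exhibits the finite left frame that condition~(4) asks for. One genuine improvement in your version: you justify the rank-$n$ claim for $\tilde V$ by noting that $y\mapsto\operatorname{tr}(p(y))$ is continuous on $\beta X$ and equal to $n$ on the dense subset $X$, a point the paper's proof of part~(a) leaves implicit. Two minor tidying remarks for part~(b): to make $A_b=C^*(F\cup A)$ a unitisation you should either adjoin $1_{\Mult(A)}$ to $F$ (as the paper does) or observe that $\sum_i p_{ii}=n\cdot 1_{\Mult(A)}$ already forces the unit into $C^*(F)$; and you should record that $A_b$ is commutative, being a subalgebra of $C(\beta X)$, before invoking Gelfand duality to obtain the metrisable $X^c$.
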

\begin{proof}
(a) Let $E=\Gamma_0(X,V)$, and let $A = C_0(X)$. By Theorems~\ref{thm:nonunital-SS} and~\ref{thm:equiv-one}, it suf\/f\/ices to show that $\Mult(E)$ is full as a left $\End_A(E)$-module. Using $\Mult(E)\cong {\rm Hom}_A(A,E)$, we see that every continuous bounded section of $V$ def\/ines an element of $\Mult(E)$.

Fix a countable open cover $\mathcal{U}$ of $X$ by sets on which $V$ is trivial. Since $X$ has f\/inite topological dimension, say $\dim(X) = d$, we can assume, by ref\/ining if necessary, that there is a~partition $\mathcal{U} = \bigsqcup\limits^d_{i=0} \mathcal{U}_i$ such that distinct elements~$U_1$,~$U_2$ of any given $\mathcal{U}_i$ are disjoint. Fix a partition of unity $\{h_U \colon U\in \mathcal{U}\}$ and choose linearly independent sections $\{f_{U,j} \colon U \in \mathcal{U},\ j=1,\dots,n\}$ of $V|_U$ such that
\begin{gather*}
\sum_{j=1}^n{}_{\End_A(E)}(f_{U,j}(x) \,|\, f_{U,j}(x)) =\sum_{j=1}^n\Theta_{f_{U,j}(x) , f_{U,j}(x)} = h_U(x){\rm Id}_V
\end{gather*}
for all $x$, $U$. For each $0 \le i \le d$ and $1\leq j\leq n$, the pointwise sum $F_{i,j} := \sum\limits_{U \in \mathcal{U}_i}
f_{U,j}$ is a bounded section of $V$ and so belongs to $\Mult(E)$. We then have
\begin{gather*}
{\rm Id}_V = \sum^d_{i=1}\sum_{j=1}^n {}_{\End_A(E)}(F_{i,j} \,|\, F_{i,j}),
\end{gather*}
so $\Mult(E)$ is full as a left $\End_A(E)$-module. A similar argument shows that $\Mult(E)$ is a~full right $C(\beta X)$-module.

For~(b), note that since $X$ is second-countable, $C_0(X)$ is separable. Thus by part~(1) of Theorem~\ref{thm:equiv-one} we see that $E$ is $A_b$-f\/inite projective for a separable unitization~$A_b$ of~$C_0(X)$. So $A_b \cong C(X^c)$ for some second-countable, and hence metrisable, compactif\/ication of $X$. The module of sections of the restriction of $\tilde{V}$ to $X^c$ is then a f\/inite projective module over~$C(X^c)$.
\end{proof}

\begin{corl}\label{thm:equiv-two} Let $A$ be nonunital, separable and commutative, so $A\cong C_0(X)$ and suppose that $X$ is of finite topological dimension. Let $E$ be a right $C^*$-$A$-module. Then the conditions $(1)$--$(4)$ of Theorem~{\rm \ref{thm:equiv-one}} are equivalent to
\begin{enumerate}\itemsep=0pt
\item[$5.$] There is a vector bundle $V\to X$ such that $E\cong \Gamma_0(X,V)$.
\end{enumerate}
\end{corl}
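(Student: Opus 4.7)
The plan is to prove the equivalence of~(5) with condition~(1) of Theorem~\ref{thm:equiv-one}, using Theorem~\ref{thm:nonunital-SS} as the bridge between the algebraic and geometric sides, and Theorem~\ref{cor:VB} to supply the required compactification when starting from~(5). The finite-topological-dimension hypothesis enters only via Theorem~\ref{cor:VB}.

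For $(5)\Rightarrow(1)$, starting from $E\cong \Gamma_0(X,V)$ I would first apply Theorem~\ref{cor:VB}(b) to extend $V$ to a vector bundle $V^c$ over a compactification $X^c$ of $X$ with $V^c|_X\cong V$, and then invoke Theorem~\ref{thm:nonunital-SS} (with $A_b=C(X^c)$) to obtain a projection $q\in M_n(C(X^c))$ with $E\cong qA^n$. The universal property of $\beta X$ embeds $C(X^c)$ in $C(\beta X)\cong \Mult(A)$, so the (finitely many) matrix entries of $q$ sit inside $\Mult(A)$. Taking $F$ to be these entries together with $1_{\Mult(A)}$ and setting $A'_b:=C^*(F\cup A)\subseteq \Mult(A)$ yields a unitisation with $q\in M_n(A'_b)$ and $E\cong qA^n$, verifying~(1). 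Conversely, for $(1)\Rightarrow(5)$, I would start from $E\cong pA^n$ with $p\in M_n(A_b)$ and use that $A$ is commutative, so that $\Mult(A)\cong C(\beta X)$ is commutative and hence $A_b$ is a unital commutative $C^*$-algebra containing $A$ as an essential ideal. Gelfand duality then identifies $A_b\cong C(X^c)$ for some compactification $X^c$ of~$X$, and regarding $p$ as an element of $M_n(C(X^c))$, Theorem~\ref{thm:nonunital-SS} supplies a vector bundle $V^c\to X^c$ with $E\cong \Gamma_0(X,V^c|_X)$. Since $X$ is open in any compactification, the restriction $V:=V^c|_X$ is a vector bundle over $X$ in the sense used throughout the paper, and we obtain~(5).

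The only substantive point in all of this is Theorem~\ref{cor:VB} itself, where finite topological dimension was used to decompose a trivialising cover into $d+1$ pairwise-disjoint subfamilies and thereby extract a \emph{finite} frame for the multiplier module. Once that extension theorem is in hand, the present corollary is a straightforward dictionary between the algebraic conditions of Theorem~\ref{thm:equiv-one} and the geometric statement~(5), mediated entirely by Theorem~\ref{thm:nonunital-SS}; I would not anticipate any further obstacle.
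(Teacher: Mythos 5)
Your proposal is correct and follows essentially the same route as the paper: both directions are mediated by Theorem~\ref{thm:nonunital-SS}, with Theorem~\ref{cor:VB} supplying the extension of $V$ to a compactification for $(5)\Rightarrow(1)$ and Gelfand duality identifying the unitisation $A_b\subseteq\Mult(A)\cong C(\beta X)$ with some $C(X^c)$ for the converse. The only (immaterial) difference is that you invoke Theorem~\ref{cor:VB}(b) and construct the finite set $F$ explicitly from the matrix entries of the projection, whereas the paper uses the extension to $\beta X$ from part~(a) and passes through condition~(2) of Theorem~\ref{thm:equiv-one}.
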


\begin{proof}
Suppose $E$ satisf\/ies~(5). By Theorem~\ref{cor:VB}, the vector bundle $V$ extends to a~bundle $\tilde{V}$ on the Stone--\v{C}ech compactif\/ication. By Theorem~\ref{thm:nonunital-SS}, $E=\Gamma_0(X,\tilde{V}|X)\cong p(C(\beta X)^N)$ for some~$N$ and some~$p\in M_N(C(\beta X))$, and so $E$ satisf\/ies~(1). Conversely, if $E$ is $A_b$-f\/inite projective for some unitisation $A_b$, then Theorem~\ref{thm:nonunital-SS} implies that there is a vector bundle $\tilde{V}$ over the spectrum of~$A_b$ such that $E\cong \Gamma_0(X,\tilde{V}|_X)$. This completes the proof.
\end{proof}

\begin{corl} Suppose that $X$ is a locally compact Hausdorff space with finite topological dimension. Then a right $C^*$-$C_0(X)$-module $E$ is $C(X^c)$ finite projective for some compactification~$X^c$ of~$X$ if and only if $E\cong\Gamma_0(X,V)$ for some vector bundle~$V\to X$.
\end{corl}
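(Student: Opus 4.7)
The plan is to derive this corollary mechanically from the two Serre--Swan-type results already in the paper, Theorem~\ref{thm:nonunital-SS} and Theorem~\ref{cor:VB}(b), in essentially the same way as Corollary~\ref{thm:equiv-two} but without needing any of the equivalences $(1)$--$(4)$ beyond the definition of $C(X^c)$-finite projectivity. Both directions are short.

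For the ``only if'' direction, suppose $E$ is $C(X^c)$-finite projective for some compactification $X^c$ of $X$. Then by Definition~\ref{def:a-b-fgp}, $E \cong p C_0(X)^n$ for some projection $p \in M_n(C(X^c))$. Theorem~\ref{thm:nonunital-SS} then immediately furnishes a (locally trivial, finite-rank) vector bundle $W \to X^c$ with $E \cong \Gamma_0(X, W|_X)$, and the restriction $V := W|_X$ is the required vector bundle over $X$.

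For the ``if'' direction, suppose $E \cong \Gamma_0(X, V)$ for some vector bundle $V \to X$. Apply Theorem~\ref{cor:VB}(b) to produce a (metrisable) compactification $X^c$ of $X$ and a vector bundle $V^c \to X^c$ with $V^c|_X = V$. Then $E \cong \Gamma_0(X, V^c|_X)$, so the other direction of Theorem~\ref{thm:nonunital-SS} supplies an integer $n$ and a projection $p \in M_n(C(X^c))$ with $E \cong p C_0(X)^n$, which is exactly the assertion that $E$ is $C(X^c)$-finite projective.

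Since each direction is a one-line invocation of a previously proved theorem, no substantive obstacle arises at this stage: all the genuine content (the fullness argument via the finite-topological-dimension partition of unity that produces a frame of bounded sections in $\Mult(E)$) has already been absorbed into the proof of Theorem~\ref{cor:VB}. The corollary is best viewed as the purely geometric restatement of the main equivalences of Theorem~\ref{thm:equiv-one} in the commutative setting.
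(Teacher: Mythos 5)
Your proof is correct and is exactly the argument the paper intends: the corollary is stated there without proof, as an immediate combination of Theorem~\ref{thm:nonunital-SS} (used in both directions) and Theorem~\ref{cor:VB}(b) (to extend $V$ to a bundle over a compactification), just as in the proof of Corollary~\ref{thm:equiv-two}. The only caveat, inherited from the paper's own statement rather than introduced by you, is that Theorem~\ref{cor:VB} is proved for \emph{second-countable} $X$, whereas this corollary assumes only that $X$ is locally compact Hausdorff of finite topological dimension, so the appeal to Theorem~\ref{cor:VB}(b) in the ``if'' direction silently uses a hypothesis not listed in the corollary.
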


\section{Multiplier modules of bi-Hilbertian bimodules}

In this section, we investigate the structure of the multiplier module of a bi-Hilbertian bimodule in the sense of Kajiwara--Pinzari--Watatani (see Def\/inition~\ref{defn:bimod} below). We show that if $E$ has f\/inite Watatani index then its multiplier module can always be made into a~bi-Hilbertian bimodule with f\/inite numerical index in its own right.

We then show that the passage of f\/inite Watatani index from~$E$ to $\Mult(E)$ is much less common; it is equivalent, for example, to fullness of $\Mult(E)$ as a left-Hilbert $\End_{A}(E)$-module. In particular, f\/inite Watatani index does not help to weaken the fullness hypotheses of Theorem~\ref{thm:equiv-one}.

\begin{defn}[{\cite[Definition~2.3]{KajPinWat}}] \label{defn:bimod} Let $A$ be a $\sigma$-unital $C^*$-algebra. A \emph{bi-Hilbertian $A$-bimodule} is a countably generated full right Hilbert $C^*$-$A$-module with inner product $(\cdot \,|\, \cdot)_A$ which is also a countably generated full left Hilbert $C^*$-$A$-module with inner product ${_A(\cdot \,|\, \cdot)}$ such that the left action of $A$ is adjointable with respect to $(\cdot \,|\, \cdot)_A$ and the right action of $A$ is adjointable with respect to ${_A(\cdot \,|\, \cdot)}$.
\end{defn}

Since a bi-Hilbertian bimodule is complete in the norms coming from both the left and the right inner products, these two norms are equivalent.

Let $E$ be a bi-Hilbertian $A$-bimodule. We recall from \cite[Def\/inition~2.8]{KajPinWat} the def\/inition of the right numerical index of $E$. We say that $E$ has \emph{finite right numerical index} if there exists $\lambda > 0$ such that
\begin{gather*}\textstyle
\big\|\sum_i {_A(f_i \,|\, f_i)}\big\| \le \lambda \big\|\sum_i \Theta_{f_i, f_i}\big\|
 \qquad\text{for all $n$ and all $f_1, \dots, f_n \in E$.}
\end{gather*}
The \emph{right numerical index} of $E$ is the inf\/imum of the numbers $\lambda$ satisfying the above inequality.

Let $E$ be a bi-Hilbertian $A$-bimodule, countably generated as a right module and with f\/inite right numerical index. By \cite[Corollaries 2.24~and~2.28]{KajPinWat}, the left action of $A$ on $E$ is by compact endomorphisms with respect to $(\cdot \,|\, \cdot)_A$ if and only if, for every frame $\{e_j\}$ for $(E, (\cdot \,|\, \cdot)_A)$, the series
\begin{gather*}
\sum_{j\geq 1}{}_A(e_j \,|\, e_j)
\end{gather*}
converges strictly in $\Mult(A)$. In this case we denote the strict limit by $\rInd(E)$. We note that $\rInd(E)$ is independent of the frame $\{e_j\}$, and is called the \emph{right Watatani index} of $A$. This $\rInd(E)$ is a positive central element of~$\Mult(A)$, and is invertible if and only if the left action of $A$ is implemented by an injective homomorphism $A \to \End_A^0(E)$.

As in \cite{KajPinWat}, we simply say that $E$ has \emph{finite right Watatani index} if it has f\/inite right numerical index and the left action is by compacts.

\begin{thm}\label{thm:main} Let $A$ be a nonunital $\sigma$-unital $C^*$-algebra and let $E$ be a bi-Hilbertian $A$-bimodule, countably generated as a right module and with injective left action. Suppose that $E$ has finite right Watatani index. Then ${_A(\cdot \,|\, \cdot)}$ extends to a strictly continuous $\Mult(A)$-valued inner product on~$\Mult(E)$, with respect to which $\Mult(E)$ is a bi-Hilbertian $\Mult(A)$-bimodule with finite right numerical index.
\end{thm}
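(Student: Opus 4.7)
The strategy is to build a strictly continuous ``trace-like'' map $\tilde\rho\colon \End_A(E)\to\Mult(A)$ and then define the left inner product on $\Mult(E)$ by
\begin{gather*}
{_{\Mult(A)}(m | n)} := \tilde\rho(\Theta_{m, n}|_E),
\end{gather*}
where $\Theta_{m, n}|_E \in \End_A(E)$ is the restriction to $E$ of the rank-one multiplier-module operator, as in the discussion before Theorem~\ref{thm:equiv-one}.

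I would first construct $\rho\colon \End_A^0(E)\to A$ by setting $\rho(\Theta_{e, f}):={_A(e | f)}$ on rank-ones and extending linearly. The finite right numerical index hypothesis makes $\rho$ a bounded completely positive map. Using finite right Watatani index, for a frame $\{e_j\}$ of $E$ and any $T\in\End_A(E)$, the partial sums $\sum_{j=1}^N\rho(T\Theta_{e_j, e_j})=\sum_{j=1}^N{_A(Te_j | e_j)}$ form a strictly Cauchy net in $\Mult(A)$: this uses the strict convergence $\sum_j{_A(e_j | e_j)}=\rInd(E)$ together with the easily checked identity $\rho(\phi(a)T)=a\rho(T)$ for $a\in A$. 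This produces a strictly continuous extension $\tilde\rho\colon\End_A(E)\to\Mult(A)$ satisfying $\tilde\rho(\tilde\phi(b)T)=b\cdot\tilde\rho(T)$ for all $b\in\Mult(A)$ and $T\in\End_A(E)$, where $\tilde\phi$ is the (injective) extension of $\phi$ to $\Mult(A)$.

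With $\tilde\rho$ in hand, I would define ${_{\Mult(A)}(\cdot | \cdot)}$ by the displayed formula above. It extends ${_A(\cdot | \cdot)}$ because $\Theta_{e, f}|_E=\Theta_{e, f}$ in $\End_A^0(E)$ for $e, f\in E$ and $\tilde\rho$ restricts to $\rho$. Positivity, conjugate symmetry, and left $\Mult(A)$-linearity transfer from the corresponding properties of $\rho$ via $(\Theta_{m, n})^*=\Theta_{n, m}$ and $\Theta_{\tilde\phi(b)m, n}=\tilde\phi(b)\Theta_{m, n}$. Strict continuity of ${_{\Mult(A)}(\cdot | \cdot)}$ is immediate from strict continuity of $\tilde\rho$, and the induced norm on $\Mult(E)$ is equivalent to the existing one by the linking-algebra argument already in the excerpt. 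Adjointability of the right and left actions with respect to the opposite inner products passes from $E$ to $\Mult(E)$ by strict continuity and the density $\Mult(E)\cdot A=E$.

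For the remaining bi-Hilbertian requirements, countable generation of $\Mult(E)$ as a right $\Mult(A)$-module, and a frame for it, are inherited from $E$: for each $m\in\Mult(E)$ and $a\in A$, $\sum_j\Theta_{e_j, e_j}(m)\cdot a=\sum_j\Theta_{e_j, e_j}(ma)\to ma$ in $E$, so $\sum_j\Theta_{e_j, e_j}\to\Id_{\Mult(E)}$ strictly. On the left I would use the rescaled frame $f_j:=e_j\cdot\rInd(E)^{-1/2}\in\Mult(E)$, well defined because injectivity of the left action makes $\rInd(E)$ invertible in $\Mult(A)$; then $\sum_j{_{\Mult(A)}(f_j | f_j)}=1_{\Mult(A)}$ strictly. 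Finite right numerical index of $\Mult(E)$ follows at once from the construction:
\begin{gather*}
\big\|\sum_i{_{\Mult(A)}(m_i | m_i)}\big\|=\big\|\tilde\rho\big(\sum_i\Theta_{m_i, m_i}|_E\big)\big\|\le\lambda\big\|\sum_i\Theta_{m_i, m_i}\big\|,
\end{gather*}
where $\lambda$ is the right numerical index of $E$. The main obstacle will be producing the strictly continuous extension $\tilde\rho$; finite right Watatani index enters essentially here, as without it the defining partial sums need not converge strictly. A secondary subtlety is converting the strict convergence $\sum_j{_{\Mult(A)}(f_j | f_j)}\to1_{\Mult(A)}$ into genuine norm-fullness of $\Mult(E)$ as a left $\Mult(A)$-module, which I expect will require a closed-ideal argument exploiting that the closed span of left inner products already contains the essential ideal $A$.
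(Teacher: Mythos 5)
Your proposal follows essentially the same route as the paper: your map $\tilde\rho$ is exactly the strictly continuous positive $A$-bilinear extension $\overline{\Phi}\colon \End_A(E)\to\Mult(A)$ of the map $\Phi(\Theta_{e,f})={}_A(e\,|\,f)$ that the paper obtains from \cite[Corollary~2.11 and Proposition~2.27]{KajPinWat}, and your definition ${}_{\Mult(A)}(m\,|\,n)=\tilde\rho(\Theta_{m,n}|_E)$ together with the numerical-index estimate $\|\sum_i{}_{\Mult(A)}(m_i\,|\,m_i)\|\le\|\tilde\rho\|\,\|\sum_i\Theta_{m_i,m_i}\|$ is precisely the paper's argument. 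The one step you gloss over --- positive-definiteness of the extended inner product and equivalence of the resulting norm with the right-inner-product norm on $\Mult(E)$ --- is handled in the paper not by the linking-algebra argument you cite (which concerns the $\End_A(E)$-valued inner product $\Theta_{e,f}$, not its image under $\overline{\Phi}$) but by the two-sided bound $\lambda' T\le\overline{\Phi}(T)\le\|T\|\,\rInd(E)$ from \cite[Proposition~2.27(1)]{KajPinWat}.
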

\begin{proof}
When the bi-Hilbertian $A$-bimodule $E$ has f\/inite right Watatani index, \cite[Corollary~2.11]{KajPinWat} shows that there is a~positive $A$-bilinear norm continuous map $\Phi\colon \End_A^0(E)\to A$ such that
\begin{gather*}
{}_A(e\,|\, f)=\Phi(\Theta_{e,f}),\qquad e,\,f\in E.
\end{gather*}
Proposition~2.27 of \cite{KajPinWat} implies that $\Phi$ extends to a bounded strictly continuous positive $A$-bilinear map $\overline{\Phi} \colon \End_{A}(E)\to \Mult(A)$ with $\|\ol{\Phi}\| \le \|\rInd(E)\|$.

Now let $\Mult(E)$ be the multiplier module of $E$. The inclusion $A \hookrightarrow \End^0_A(E) \subseteq \L(E)$ implementing the left action extends to a homomorphism $\Mult(A) \to \End_A(E) \subseteq \Mult(\L(E))$, giving a left action of $\Mult(A)$ on $\Mult(E)$ by $(\cdot \,|\, \cdot)_{\Mult(A)}$-adjointable operators.

We saw at~\eqref{eq:E multE inv} that for $e,f \in \Mult(E)$ the operator $\Theta_{e,f} \in \End^0_{\Mult(A)}(\Mult(E))$ restricts to an adjointable operator on $E$, so we can def\/ine ${_{\Mult(A)}(\cdot \,|\, \cdot)} \colon \Mult(E) \times \Mult(E) \to \Mult(A)$ by
\begin{gather*}
{}_{\Mult(A)}(e\,|\, f):=\ol{\Phi}(\Theta_{e,f}).
\end{gather*}
By def\/inition of $\Phi$, this strictly continuous form extends ${_A(\cdot \,|\, \cdot)}$.

We claim that this is a left-$\Mult(A)$-linear $\Mult(A)$-valued inner-product on~$\Mult(E)$. To see this, observe that sesquilinearity of $(e,f) \mapsto \Theta_{e,f}$, linearity of restriction and linearity of $\overline{\Phi}$ show that ${_{\Mult(A)}(\cdot \,|\, \cdot)}$ is sesquilinear. For $a \in \Mult(A)$ and $e,f \in \Mult(E)$, we use the $A$-bilinearity of $\overline{\Phi}$ to see that
\begin{gather*}
a \cdot {_{\Mult(A)}(e \,|\, f)} = a \overline{\Phi}(\Theta_{e,f}|_E) = \overline{\Phi}(a \Theta_{e,f}|_E) = \overline{\Phi}(\Theta_{a \cdot e, f}|_E) = {_{\Mult(A)}(a \cdot e \,|\, f)},
\end{gather*}
so ${_{\Mult(A)}(\cdot \,|\, \cdot)}$ is left-$A$-linear. It is positive because
$\overline{\Phi}$ is.

By \cite[Proposition 2.27(1)]{KajPinWat}, the maximal $\lambda' > 0$ such that $\lambda'\Vert (e\,|\, e)_A\Vert\leq \Vert{}_A(e\,|\, e)\Vert$ for all $e\in E$ satisf\/ies
\begin{gather*}
\lambda'T\leq \ol{\Phi}(T)\leq \Vert T\Vert \rInd(E)\qquad\text{for all $0\leq T\in \End_A(E)$.} 
\end{gather*}
Hence ${_{\Mult(A)}(\cdot \,|\, \cdot)}$ is positive def\/inite. The same inequality combined with the norm equality $\Vert \Theta_{e,e}\Vert=\Vert (e\,|\, e)_{\Mult(A)}\Vert$ gives
\begin{gather*}
\lambda'\Vert (e\,|\, e)_{\Mult(A)}\Vert \leq\Vert {}_{\Mult(A)}(e\,|\, e)\Vert \leq \Vert \rInd(E)\Vert\,\Vert (e\,|\, e)_{\Mult(A)}\Vert,
\end{gather*}
and so the norms on $\Mult(E)$ coming from the left and right inner products are equivalent.

To see that $\Mult(E)$ has f\/inite right numerical index, let $g_1, \dots, g_m \in \Mult(E)$ be any f\/inite set. Since $\overline{\Phi}$ is bounded we have
\begin{gather*}
\textstyle \big\|\sum_i {_{\Mult(A)}(g_i \,|\, g_i)}\big\| = \big\|\sum_i \overline{\Phi}(\Theta_{g_i, g_i})\big\| \le \Vert\ol{\Phi}\Vert\, \big\|\sum_i \Theta_{g_i, g_i}\big\|.
\end{gather*}
In particular, given a frame $\{g_j \colon j \in \N\}$ for $\Mult(E)$ and any f\/inite subset
$I$ of $\N$, we have
\begin{gather*}\textstyle
\big\|\sum_{i\in I} {_{\Mult(A)}(g_i \,|\, g_i)}\big\| \leq \Vert\ol{\Phi}\Vert.
\end{gather*}
So $\Mult(E)$ has f\/inite right numerical index.
\end{proof}

Our next theorem shows that while f\/inite right numerical index passes easily from $E$ to $\Mult(E)$ as above, f\/inite right Watatani index is another question.

\begin{thm}\label{thm:TFAE} Let $A$ be a $\sigma$-unital non-unital $C^*$-algebra and let $E$ be a~bi-Hilbertian $A$-bi\-module, countably generated as a right module and with injective left action. Suppose that~$E$ has finite right Watatani index. Then the following are equivalent:
\begin{enumerate}\itemsep=0pt
\item[$1)$] $\Mult(E)$ is full as a left $\Mult(A)$-module;
\item[$2)$] ${\rm Id}_{\Mult(E)}$ is a compact endomorphism of $\Mult(E)_{\Mult(A)}$;
\item[$3)$] the left action of $\Mult(A)$ on $\Mult(E)_{\Mult(A)}$ is by compact endomorphisms;
\item[$4)$] $\rInd(\Mult(E))\in\Mult(A)$ $($that is, $\Mult(E)$ has finite right Watatani index$)$; and
\item[$5)$] $E$ is $\Mult(A)$-finite projective as in Definition~{\rm \ref{def:a-b-fgp}}.
\end{enumerate}
\end{thm}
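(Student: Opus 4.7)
The strategy is to establish (2) $\Leftrightarrow$ (5), (2) $\Leftrightarrow$ (3), (3) $\Leftrightarrow$ (4), and (5) $\Rightarrow$ (1), and then close the loop with (1) $\Rightarrow$ (5); most of these are immediate from results already in hand. The equivalence (2) $\Leftrightarrow$ (5) is direct from Theorem~\ref{thm:equiv-one}, since condition (5) is the same as $\Mult(E)$ being finitely generated projective as a right $\Mult(A)$-module, which by the reference to \cite[Theorem~8.1.27]{BleM} used in that proof is exactly $\Id_{\Mult(E)}$ being compact. For (2) $\Leftrightarrow$ (3), the left action of $\Mult(A)$ on $\Mult(E)$ carries $1_{\Mult(A)}$ to $\Id_{\Mult(E)}$, so (3) immediately implies (2), and conversely, if $\Id_{\Mult(E)}$ is compact then each $a \cdot \Id_{\Mult(E)}$ is compact because $\End^0_{\Mult(A)}(\Mult(E))$ is a closed two-sided ideal of $\End_{\Mult(A)}(\Mult(E))$. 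The equivalence (3) $\Leftrightarrow$ (4) is the Kajiwara--Pinzari--Watatani characterisation recalled just before Theorem~\ref{thm:main}, applied to the bi-Hilbertian $\Mult(A)$-bimodule structure on $\Mult(E)$ supplied by that theorem; since $\Mult(A)$ is unital, $\Mult(\Mult(A)) = \Mult(A)$ with strict topology equal to the norm topology, so ``finite right Watatani index of $\Mult(E)$'' reduces to ``left action by compacts''.

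For (5) $\Rightarrow$ (1), a finite frame $\{g_1, \ldots, g_N\}$ for $\Mult(E)$ gives $\Id_{\Mult(E)} = \sum_j \Theta_{g_j, g_j}$, so by definition of the left inner product and strict continuity of $\overline{\Phi}$ we get $\sum_j {_{\Mult(A)}}(g_j \,|\, g_j) = \overline{\Phi}(\Id_E) = \rInd(E)$. The hypothesis that the left action is injective forces $\rInd(E)$ to be invertible in $\Mult(A)$, so every $a \in \Mult(A)$ can be written as the finite sum $a = \sum_j {_{\Mult(A)}}(a \rInd(E)^{-1} g_j \,|\, g_j)$, which gives fullness.

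The main work is closing the loop with (1) $\Rightarrow$ (5). I would recast (5) via condition~(4) of Theorem~\ref{thm:equiv-one}, so that (5) becomes the statement that the closed two-sided ideal $J := \overline{\linspan}\{\Theta_{\xi, \eta}|_E : \xi, \eta \in \Mult(E)\}$ of $\End_A(E)$ equals all of $\End_A(E)$, i.e., that $\Id_E \in J$. Since ${_{\Mult(A)}}(\xi \,|\, \eta) = \overline{\Phi}(\Theta_{\xi, \eta}|_E)$, condition (1) asserts exactly that $\overline{\Phi}(J)$ is norm-dense in $\Mult(A)$; a short computation using $\Mult(A)$-bilinearity of $\overline{\Phi}$ and invertibility of $\rInd(E)$ shows in fact that $\overline{\Phi}(\End_A(E)) = \Mult(A)$ already, so the real task is to show that the density of $\overline{\Phi}(J)$, combined with the fact that $J$ is a closed two-sided ideal containing $\End_A^0(E)$, forces $\Id_E \in J$. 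My plan is to exploit the Pimsner--Popa-type inequality $\lambda' T \leq \overline{\Phi}(T) \leq \Vert T \Vert \rInd(E)$ (for positive $T \in \End_A(E)$) appearing in the proof of Theorem~\ref{thm:main}: approximating $\rInd(E) = \overline{\Phi}(\Id_E)$ by $\overline{\Phi}(T_n)$ with $T_n \in J^+$, the inequality provides a uniform norm bound on the $T_n$, and combined with the invertibility of the left action of $\rInd(E)$ and faithfulness of $\overline{\Phi}$ on the positive cone, this should produce a norm-Cauchy sequence in $J$ with limit $\Id_E$. The chief obstacle is that $\overline{\Phi}$ has a nontrivial kernel in general, so density of $\overline{\Phi}(J)$ does not translate mechanically into $J = \End_A(E)$; the Pimsner--Popa inequality is the bridge I would rely on to recover positivity information lost under $\overline{\Phi}$.
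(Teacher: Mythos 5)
Most of your reductions are sound and track the paper's: (2)$\,\Leftrightarrow\,$(3) via the ideal property of the compacts, (3)$\,\Leftrightarrow\,$(4) via the Kajiwara--Pinzari--Watatani characterisation applied to the bi-Hilbertian $\Mult(A)$-bimodule structure on $\Mult(E)$ supplied by Theorem~\ref{thm:main}, (2)$\,\Leftrightarrow\,$(5) via Theorem~\ref{thm:equiv-one} together with \cite[Theorem~8.1.27]{BleM}, and your (5)$\,\Rightarrow\,$(1) argument using centrality and invertibility of $\rInd(E)$ is correct and in fact more explicit than anything in the paper. The genuine gap is (1)$\,\Rightarrow\,$(5), which you rightly identify as the only substantive implication but do not prove. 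Your plan --- lift an approximation $\overline{\Phi}(T_n)\to\rInd(E)$ with $T_n$ positive in your ideal $J=\overline{\linspan}\{\Theta_{\xi,\eta}|_E\}$, extract a uniform norm bound from $\lambda' T_n\le\overline{\Phi}(T_n)$, and conclude that $(T_n)$ is norm-Cauchy with limit $\Id_E$ --- is a non sequitur. A uniformly bounded sequence of positive operators whose images under a non-injective positive map converge need not be Cauchy, and faithfulness of $\overline{\Phi}$ on the positive cone carries no quantitative content in the reverse direction. The Pimsner--Popa inequality can be made to work, but it must be applied to $\Id_E-T_n$ rather than to $T_n$: if one can arrange $T_n\in J$ with $0\le T_n\le\Id_E$ and $\overline{\Phi}(T_n)\to\overline{\Phi}(\Id_E)=\rInd(E)$, then $\lambda'(\Id_E-T_n)\le\overline{\Phi}(\Id_E-T_n)\to 0$ forces $T_n\to\Id_E$ in norm, whence $\Id_E\in J$. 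But producing such $T_n$ --- simultaneously positive, dominated by $\Id_E$, lying in $J$, and with $\overline{\Phi}(T_n)\to\rInd(E)$ --- from mere norm-density of $\overline{\Phi}(J)$ in $\Mult(A)$ is itself the hard part, and your sketch does not address it.

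The paper avoids this entirely: it obtains (1)$\,\Leftrightarrow\,$(4) directly from \cite[Corollaries~2.20 and~2.28]{KajPinWat} (using injectivity of the left action of $\Mult(A)$), gets (3)$\,\Leftrightarrow\,$(4) from \cite[Theorem~2.22]{KajPinWat}, and then reaches (5) through the chain (1)$\,\Rightarrow\,$(4)$\,\Rightarrow\,$(3)$\,\Rightarrow\,$(2) and the finite-generation argument already present in Theorem~\ref{thm:equiv-one}. So the implication you are trying to prove by hand is precisely the content of the cited Kajiwara--Pinzari--Watatani results. Either invoke those, or carry out the lifting argument sketched above in full; as written, your proof is incomplete at exactly the point where the theorem's content lies.
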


\begin{proof} The left action of $\Mult(A)$ is injective. Hence \cite[Corollaries~2.20 and~2.28]{KajPinWat} gives \mbox{(1)\;$\Leftrightarrow$\;(4)}. We have
\mbox{(2)\;$\Leftrightarrow$\;(3)} because $\End_{\Mult(A)}^0(\Mult(E))$ is an ideal of $\End_{\Mult(A)}(\Mult(E))$ and because the identity of $\Mult(A)$ acts as ${\rm Id}_{\Mult(E)}$. Similarly, \mbox{(5)\;$\Rightarrow$\;(3)} since in this case all the endomorphisms are compact. The implication \mbox{(1)\;$\Rightarrow$\;(5)} follows from the implication \mbox{(4)\;$\Rightarrow$\;(2)} in Theorem~\ref{thm:equiv-one}. So it suf\/f\/ices to prove that \mbox{(3)\;$\Leftrightarrow$\;(4)}, which follows from \cite[Theorem~2.22]{KajPinWat}.
\end{proof}

\begin{rmk}If the equivalent conditions (1)--(5) of Theorem~\ref{thm:TFAE} hold, then Theorem~\ref{thm:equiv-one} shows that in fact $E$ is $A_b$-f\/inite projective for some subalgebra of $\Mult(A)$ generated by $A$ and just f\/initely many additional elements.
\end{rmk}

\begin{rmk} By Theorem~\ref{thm:main}, if the equivalent conditions in Theorem~\ref{thm:TFAE} hold, then the left inner product on $E$ extends by strict continuity to a $\Mult(A)$-valued left inner product on $\Mult(E)$ under which $\Mult(E)$ itself becomes a bi-Hilbertian bimodule with invertible f\/inite right Watatani index.
\end{rmk}

\begin{Example}We describe a concrete example where Theorem~\ref{thm:main} applies but the equivalent conditions in Theorem~\ref{thm:TFAE} do not hold. Consider the non-unital $C^*$-algebra $A := \K$, the compact operators on $\ell^2(\N)$. Let $E$ be the external tensor product $E = \ell^2 \otimes \K$, which is a Morita equivalence from $\K \cong \K \otimes \K$ to $\C \otimes \K \cong \K$, and hence an $A$--$A$-imprimitivity bimodule. So $E$ has f\/inite right Watatani index $\rInd(E) = 1_{M(\K)}$.

It is routine to check that $\Mult(E) \cong \ell^2 \otimes \mathcal{B}(\ell^2)$. Since the left action of $\Mult(A) = \mathcal{B}(\ell^2)$ on this module is not by compact operators, Theo\-rem~\ref{thm:TFAE}(2) fails, and therefore so do the other conditions. In particular, while $\Mult(E)$ has f\/inite numerical index by Theorem~\ref{thm:main}, it does not have f\/inite right Watatani index.
\end{Example}

Finite right Watatani index can nevertheless be a useful tool for deciding when we obtain f\/inite projective modules over unitisations. For a commutative algebra~$A$, we can equip a right inner product module $E$ over $A$ with a left action and inner product via the formulae
\begin{gather}\label{eq:comm biHilb structure}
a\cdot e:=ea,\qquad\mbox{and}\qquad {}_A(e_1 \,|\, e_2):=(e_2 \,|\, e_1)_A,\qquad\text{for all} \ \ e,\,e_1,\,e_2 \in E,\ a\in A.
\end{gather}
So we are in the bi-Hilbertian setting. For each character $\phi \in \widehat{A}$, the (completion of the) quotient $\H_\phi := E/(E \cdot \ker\phi)$ is a Hilbert space with inner product $\langle e_1,e_2\rangle = \phi((e_1\,|\, e_2)_A)$.

\begin{corl}\label{cor:VB2} Let $X$ be a second-countable locally compact Hausdorff space of finite topological dimension, and let $A=C_0(X)$. Let $E$ be a full right Hilbert $A$-module, regarded as a bi-Hilbertian bimodule as in~\eqref{eq:comm biHilb structure}. The following are equivalent:
\begin{enumerate}\itemsep=0pt
\item[$1)$] The map $\phi \mapsto \dim \H_\phi$ is a continuous bounded function from $X$ to $[0,\infty)$;
\item[$2)$] $E$ is isomorphic to $\Gamma_0(X,V)$ for some $($finite rank$)$ vector bundle $V\to X$;
\item[$3)$] $E$ has finite right Watatani index.
\end{enumerate}
\end{corl}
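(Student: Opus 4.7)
The plan is to prove the cyclic chain $(2)\Rightarrow(3)\Rightarrow(1)\Rightarrow(2)$. Throughout I identify characters of $A=C_0(X)$ with points of $X$, write $\Mult(A)=C_b(X)$, and recall that in the commutative bi-Hilbertian setting one has ${}_A(e\,|\, f)=(f\,|\, e)_A$, so left and right inner products agree on diagonal entries. For $(2)\Rightarrow(3)$, suppose $E\cong\Gamma_0(X,V)$ with $\operatorname{rank}(V)\le N$. Finite right numerical index is immediate from the pointwise inequality $\sum_i\|f_i(x)\|^2=\operatorname{tr}\bigl(\sum_i f_i(x)\otimes f_i(x)^*\bigr)\le N\bigl\|\sum_i f_i(x)\otimes f_i(x)^*\bigr\|_{\mathrm{op}}$. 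For compactness of the left action, note that for $a\in C_0(X)$ of compact support $K$ the operator $T_a$ factors through the finitely generated projective $C(K)$-module $\Gamma(K,V|_K)$ and is hence compact; approximating a general $a$ by such $a_n$ in norm yields compactness for all of $A$.

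For $(3)\Rightarrow(1)$, fix a frame $\{e_j\}$ for $E$. Finite right Watatani index gives that $\sum_j (e_j\,|\, e_j)_A$ converges strictly in $\Mult(A)=C_b(X)$ to $\rInd(E)$, a bounded continuous function on $X$. For each $\phi\in X$ the quotient map $E\to\H_\phi$ carries $\sum_j\Theta_{e_j,e_j}=\Id_E$ to $\sum_j\Theta_{e_j(\phi),e_j(\phi)}=\Id_{\H_\phi}$ in the strong operator topology, and the sum of the traces is $\sum_j\|e_j(\phi)\|^2=\phi(\rInd(E))<\infty$. Since the identity on an infinite-dimensional Hilbert space is not trace class, $\H_\phi$ must be finite-dimensional with $\dim\H_\phi=\phi(\rInd(E))$. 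Hence $\phi\mapsto\dim\H_\phi$ coincides with the bounded continuous function $\rInd(E)$.

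For $(1)\Rightarrow(2)$, the integer-valued continuous function $\phi\mapsto\dim\H_\phi$ is locally constant and, being bounded, takes only finitely many values $n_1<\cdots<n_k$. So $X$ splits into clopen pieces $X_i=\{\phi\colon\dim\H_\phi=n_i\}$ and $E=\bigoplus_i E_i$ with $E_i$ of constant fibre dimension $n_i$. It suffices to build rank-$n_i$ bundles $V_i\to X_i$ with $E_i\cong\Gamma_0(X_i,V_i)$ and set $V=\bigsqcup_i V_i$. Near any $\phi_0\in X_i$, choose $e_1,\dots,e_{n_i}\in E_i$ whose images in $\H_{\phi_0}$ form an orthonormal basis; the continuous Gram matrix $\bigl((e_j\,|\, e_k)_A(\phi)\bigr)_{j,k}$ equals $I$ at $\phi_0$, hence is invertible on a neighbourhood $U_{\phi_0}$, on which the $e_j$ restrict to a basis of each $\H_\phi$ and trivialise $\bigsqcup_{\phi\in U_{\phi_0}}\H_\phi$. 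To extract a \emph{finite} frame I cover $X_i$ by such neighbourhoods, refine using $\dim X_i\le\dim X<\infty$ into $\dim X_i+1$ pairwise-disjoint families, and apply the partition-of-unity construction from the proof of Theorem~\ref{cor:VB}(a); this shows $E_i$ is $\Mult(C_0(X_i))$-finite projective, whereupon Corollary~\ref{thm:equiv-two} produces the bundle $V_i$.

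The main obstacle is $(1)\Rightarrow(2)$, where the topology has to do the work of upgrading pointwise fibre-dimension information into genuine local triviality of a vector bundle. The finite topological dimension hypothesis is precisely what allows the partition-of-unity construction from Theorem~\ref{cor:VB}(a) to deliver a finite frame on each clopen piece, and Corollary~\ref{thm:equiv-two} then yields the required bundle.
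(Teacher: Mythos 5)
Your cycle $(2)\Rightarrow(3)\Rightarrow(1)\Rightarrow(2)$ is correct, but the key implication $(1)\Rightarrow(2)$ is proved by a genuinely different route from the paper's. The paper builds the bundle directly: it uses Fell--Doran's theory of continuous Banach bundles to topologise $\bigcup_\phi \H_\phi$ so that the maps $S_e\colon \phi\mapsto e+E\cdot\ker\phi$ are continuous sections, shows $e\mapsto S_e$ is an isomorphism onto $\Gamma_0(X,V)$ by a density-plus-completeness argument, and then verifies local triviality by exactly your Gram-matrix estimate; notably that argument is purely local and uses neither finite topological dimension nor your clopen decomposition by rank. You instead use the local bases to manufacture a finite left frame for $\Mult(E_i)$ via the colouring-and-partition-of-unity trick from Theorem~\ref{cor:VB}(a), deduce from Theorem~\ref{thm:equiv-one} that $E_i$ is $\Mult(A)$-finite projective, and let Corollary~\ref{thm:equiv-two} (i.e.\ the non-unital Serre--Swan Theorem~\ref{thm:nonunital-SS}) hand you the bundle; this is non-circular and works, but it outsources the bundle construction and consumes the finite-dimension hypothesis where the paper does not. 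Two points of your sketch need to be made explicit: in Theorem~\ref{cor:VB}(a) the elements $F_{i,j}$ lie in $\Mult(E)$ because they are bounded sections of an already-given bundle, whereas you must check directly that the locally finite, fibrewise-orthogonal sum $\sum_U f_{U,j}$ defines an adjointable map $A\to E$ (it does, using $(f_{U,j}\,|\,f_{U',j'})_A=0$ for distinct $U$, $U'$ in one colour class); and you need $\sum_j\Theta_{f_{U,j},f_{U,j}}=h_U\Id_E$ as an operator identity rather than merely fibrewise, which holds because the fibre representations jointly separate $E$. Your $(2)\Rightarrow(3)$ also differs mildly: the paper obtains compactness of the left action from strict convergence of $\sum_{i,j}{}_A(f_{i,j}\,|\,f_{i,j})$ together with the Kajiwara--Pinzari--Watatani criterion, while you exhibit the action of a compactly supported function as finite rank directly (your ``factors through $\Gamma(K,V|_K)$'' should be unwound as $T_a=\sum_j\Theta_{a\tilde g_j,\tilde g_j}$ for extensions $\tilde g_j$ of a finite frame for $V|_K$, since the factorisation is through a module over a different algebra). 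The numerical-index trace estimate and the $(3)\Rightarrow(1)$ identification of $\rInd(E)$ with $\phi\mapsto\dim\H_\phi$ coincide with the paper's.
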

\begin{proof}
We f\/irst prove that~(1) implies~(2). For $e\in E$, def\/ine a section $S_e \colon X \to \bigcup_{\phi \in X} \H_\phi = E/(E \cdot \ker\phi)$ by $S_e(\phi) = e + E\cdot\ker\phi$. This set of sections is a vector space and is f\/ibrewise dense (in fact, f\/ibrewise surjective), and so by \cite[Theorem~II.13.18]{FellDoran} there is a unique topology on $V := \bigcup_\phi \H_\phi$ under which these sections are continuous.

For $a \in C_0(X)$ and $e \in E$, we see that $S_{e \cdot a}(\phi) = a(\phi)S_e(\phi)$ for all $\phi\in \hat{A}$. So \cite[Corollary~II.14.7]{FellDoran} shows that the $S_e$ are uniformly-on-compacta dense in $\Gamma_0(X, V)$. A simple argument using completeness
of $E$ then shows that $e \mapsto S_e$ is a surjection of $E$ onto $\Gamma_0(X, V)$. By def\/inition, this surjection carries the right action and inner product on $E$ to those on $\Gamma_0(X, V)$. So $e \mapsto S_e$ is an isomorphism $E \cong \Gamma_0(X, V)$.

We must check that $V$ is locally trivial. Remark~II.13.9 of \cite{FellDoran} states that f\/inite-rank continuous bundles of Banach spaces of constant dimension are locally trivial; we provide a proof in our setting for completeness (see also \cite[Remarque, p.~231]{DD}).

Fix $\phi \in X$, and choose an orthonormal basis $\{e_1, \dots, e_n\}$ for $\H_\phi$. Fix elements $\xi_i \in E$ such that $\xi_i + E \cdot \ker\phi = e_i$. By scaling by an appropriate element of $C_0(X)$ we can assume that there is a neighbourhood $U$ of $\phi$ such that for $\psi \in U$, the element $e^\psi_i := \xi_i + E \cdot \ker\psi$ satisf\/ies $\|e^\psi_i\| = 1$. Since $\psi \mapsto \dim\H_\psi$ is continuous, and since $(\xi_i \,|\, \xi_j)$ is continuous for all $i$, $j$, by shrinking $U$ if necessary we can assume that $\dim\H_\psi = n$ for all $\psi \in U$ and that $|\langle e^\psi_i, e^\psi_j\rangle| < \frac{1}{2n^2}$ for $i \not= j$ and $\psi \in U$. Now if $\psi \in U$ and $\sum_i \alpha_i e^\psi_i = 0$, then
\begin{gather*}
0 = \Big|\Big\langle \sum_i \alpha_i e^\psi_i, \sum_j \alpha_j e^\psi_j\Big\rangle\Big|
 \ge \sum_i |\alpha_i|^2 \|e^\psi_i\|^2 - \frac{(n^2-n) \max_i|\alpha_i|^2}{2n^2} \ge \max_i|\alpha_i|^2/2.
\end{gather*}
So $V$ admits a continuous choice of basis on $U$, so is locally trivial.

To see that~(2) implies~(3), suppose that $E$ has the form $\Gamma_0(X, V)$. Choose a~locally f\/inite cover of $X$ by sets $U_i$ on which on which $V$ is trivial, say $V|_{U_i} \cong U_i \times \C^{n_i}$. Since $V$ is f\/inite rank, $\sup_i n_i<\infty$. Let $e_1,\dots,e_j$ denote the bounded sections of $V|_{U_i}$ given by the standard basis of~$\C^{n_i}$.

Choose a partition of unity $(h_i)$ subordinate to the $U_i$, and put $f_{i,j} = \sqrt{h_i}e_j$ for $1\leq j \le n_i$ and $i \in \N$. Then just as in Theorem~\ref{cor:VB} we see that
\begin{gather*}
\sum_{i,j}\Theta_{f_{i,j},f_{i,j}}=\sum_ih_i{\rm Id}_{V|_{U_i}}={\rm Id}_V.
\end{gather*}
Using the module structure def\/ined in \eqref{eq:comm biHilb structure}, we see that the left inner product of two sections $e=\sum_jc_je_j$, $f=\sum_kd_ke_k$ supported in $U_i$ is given by
\begin{gather*}
{}_A(e\,|\, f)= \sum_{j,k}c_j\ol{d_k}(e_k\,|\, e_j)_A= \sum_jc_j\ol{d_j}=
 \sum_{j}(e_j\,|\, e)_A(f\,|\, e_j)_A= \sum_j(e_j\,|\,\Theta_{e,f}e_j)_A\\
 \hphantom{{}_A(e\,|\, f)}{} = {\rm Trace}(\Theta_{e,f}).
\end{gather*}
Hence for any f\/inite set $\{e_k \colon k \le K\} \subseteq E$, we have
\begin{gather*}
\Big\Vert \sum_k{}_A(e_k\,|\, e_k)\Big\Vert \leq (\sup_i n_i)\Big\Vert \sum_k\Theta_{e_k,e_k}\Big\Vert,
\end{gather*}
and so $E$ has f\/inite numerical index.

This also shows that $\sum_j{_A(f_{i,j} \,|\, f_{i,j})}=h_in_i$ on the open set $U_i$, and so $\big(\sum_{i,j} {_A(f_{i,j} \,|\, f_{i,j})} \big)a$ converges to $n_i a$ for $a \in C_c(U_i)$. So $\sum_{i,j} {_A(f_{i,j} \,|\, f_{i,j})}$ converges strictly. That is, $E$ has f\/inite right Watatani index.

Finally, for~(3) implies~(1), suppose that $E$ has f\/inite right Watatani index. By def\/inition of the bi-Hilbertian structure~\eqref{eq:comm biHilb structure}, the right Watatani index of $E$ is the function $\phi \mapsto \dim\H_\phi$. By def\/inition of f\/inite Watatani index, we deduce that $(\phi \mapsto \dim\H_\phi) \in \Mult(C_0(X)) \cong C_b(X)$, giving~(1).
\end{proof}

\subsection*{Acknowledgements} This research was supported by Australian Research Council grant DP150101595. It was motivated by questions arising in projects with our collaborators Francesca Arici, Magnus Gof\/feng, Bram Mesland and Dave Robertson, and we thank them for all that we have learned from them. We are very grateful to the anonymous referee who read the manuscript very closely and made numerous very helpful suggestions that have signif\/icantly strengthened our results and streamlined our proofs. Thanks, whoever you are.

\pdfbookmark[1]{References}{ref}
\LastPageEnding


\begin{thebibliography}{99}
\footnotesize\itemsep=0pt

\bibitem{AB}
Aramba\v{s}i\'c L., Baki\'c D., Frames and outer frames for {H}ilbert
 $C^*$-modules, \href{https://doi.org/10.1080/03081087.2016.1186588}{\textit{Linear Multilinear Algebra}} \textbf{65} (2017),
 381--431, \href{https://arxiv.org/abs/1507.04101}{arXiv:1507.04101}.

\bibitem{BleM}
Blecher D.P., Le~Merdy C., Operator algebras and their modules---an operator
 space approach, \href{https://doi.org/10.1093/acprof:oso/9780198526599.001.0001}{\textit{London Mathematical Society Monographs, Oxford
 Science Publications}}, Vol.~30, The Clarendon Press, Oxford University Press,
 Oxford, 2004.

\bibitem{DD}
Dixmier J., Douady A., Champs continus d'espaces hilbertiens et de
 {$C^{\ast}$}-alg\`ebres, \href{https://doi.org/10.24033/bsmf.1596}{\textit{Bull. Soc. Math. France}} \textbf{91} (1963),
 227--284.

\bibitem{ER}
Echterhof\/f S., Raeburn I., Multipliers of imprimitivity bimodules and {M}orita
 equivalence of crossed products, \href{https://doi.org/10.7146/math.scand.a-12543}{\textit{Math. Scand.}} \textbf{76} (1995),
 289--309.

\bibitem{FellDoran}
Fell J.M.G., Doran R.S., Representations of {$^*$}-algebras, locally compact
 groups, and {B}anach {$^*$}-algebraic bundles, {V}ol.~1, {B}asic
 representation theory of groups and algebras, \textit{Pure and Applied
 Mathematics}, Vol.~125, Academic Press, Inc., Boston, MA, 1988.

\bibitem{KajPinWat}
Kajiwara T., Pinzari C., Watatani Y., Jones index theory for {H}ilbert
 {$C^*$}-bimodules and its equivalence with conjugation theory,
 \href{https://doi.org/10.1016/j.jfa.2003.09.008}{\textit{J.~Funct. Anal.}} \textbf{215} (2004), 1--49,
 \href{https://arxiv.org/abs/math.OA/0301259}{arXiv:math.OA/0301259}.

\bibitem{RT}
Raeburn I., Thompson S.J., Countably generated {H}ilbert modules, the
 {K}asparov stabilisation theorem, and frames with {H}ilbert modules,
 \href{https://doi.org/10.1090/S0002-9939-02-06787-4}{\textit{Proc. Amer. Math. Soc.}} \textbf{131} (2003), 1557--1564.

\bibitem{tfb}
Raeburn I., Williams D.P., Morita equivalence and continuous-trace
 {$C^*$}-algebras, \href{https://doi.org/10.1090/surv/060}{\textit{Mathematical Surveys and Monographs}}, Vol.~60,
 Amer. Math. Soc., Providence, RI, 1998.

\bibitem{RennieSmooth}
Rennie A., Smoothness and locality for nonunital spectral triples,
 \href{https://doi.org/10.1023/A:1024523203609}{\textit{$K$-Theory}} \textbf{28} (2003), 127--165.

\bibitem{RRSext}
Rennie A., Robertson D., Sims A., The extension class and {KMS} states for
 {C}untz--{P}imsner algebras of some bi-{H}ilbertian bimodules,
 \href{https://doi.org/10.1142/S1793525317500108}{\textit{J.~Topol. Anal.}} \textbf{9} (2017), 297--327, \href{https://arxiv.org/abs/1501.05363}{arXiv:1501.05363}.

\bibitem{Swan}
Swan R.G., Vector bundles and projective modules, \href{https://doi.org/10.2307/1993627}{\textit{Trans. Amer. Math.
 Soc.}} \textbf{105} (1962), 264--277.

\end{thebibliography}
\end{document}